\newtheorem{theorem}{Theorem}[section]
\newtheorem{proposition}[theorem]{Proposition}
\newtheorem{lemma}[theorem]{Lemma}
\theoremstyle{definition}
\newtheorem{example}[theorem]{Example}
\theoremstyle{definition}
\newtheorem{definition}[theorem]{Definition}
\theoremstyle{definition}
\newtheorem{remark}[theorem]{Remark}
\newcommand{\V}{\mathcal V_X}
\definecolor{codegreen}{rgb}{0,0.6,0}
\definecolor{codegray}{rgb}{0.5,0.5,0.5}
\definecolor{codepurple}{rgb}{0.58,0,0.82}
\definecolor{backcolour}{rgb}{0.95,0.95,0.92}
\def\@tocline#1#2#3#4#5#6#7{\relax
  \ifnum #1>\c@tocdepth 
  \else
    \par \addpenalty\@secpenalty\addvspace{#2}%
    \begingroup \hyphenpenalty\@M
    \@ifempty{#4}{%
      \@tempdima\csname r@tocindent\number#1\endcsname\relax
    }{%
      \@tempdima#4\relax
    }%
    \parindent\z@ \leftskip#3\relax \advance\leftskip\@tempdima\relax
    \rightskip\@pnumwidth plus4em \parfillskip-\@pnumwidth
    #5\leavevmode\hskip-\@tempdima
      \ifcase #1
       \or\or \hskip 1em \or \hskip 2em \else \hskip 3em \fi%
      #6\nobreak\relax
    \dotfill\hbox to\@pnumwidth{\@tocpagenum{#7}}\par
    \nobreak
    \endgroup
  \fi}
\lstdefinestyle{mystyle}{
    backgroundcolor=\color{white},   
    commentstyle=\color{codegreen},
    keywordstyle=\color{blue},
    numberstyle=\tiny\color{codegray},
    stringstyle=\color{codepurple},
    basicstyle=\ttfamily\footnotesize,
    breakatwhitespace=false,         
    breaklines=true,                 
    captionpos=b,                    
    keepspaces=true,                 
    numbersep=5pt,                  
    showspaces=false,                
    showstringspaces=false,
    showtabs=false,                  
    tabsize=2,
    keywords = {subst, truncate, local, return, sqrt, random, ellperiods, ellinit, print, log, List, length, listput, Vec, ellap, ellan, abs, select, exp, round, prec, imag, real, concat, write, ceil, ellfromeqn, polcoef, j, forstep, contfrac, zeta , forstep, if, my, local},
    frame = single, framexleftmargin=0pt
}
\newcommand\blfootnote[1]{
    \begingroup
    \renewcommand\thefootnote{}\footnote{#1}
    \addtocounter{footnote}{-1}
    \endgroup
}
\begin{document}

\title{Elliptic Curves in Game Theory}

\author{Abhiram Kidambi}
\address{Max Planck Institute for Mathematics in the Sciences, Inselstra\ss e 22, 04103 Leipzig, Germany}
\email{kidambi@mis.mpg.de}

\author{Elke Neuhaus}
\address{Max Planck Institute for Mathematics in the Sciences, Inselstra\ss e 22, 04103 Leipzig, Germany}
\email{elke.neuhaus@mis.mpg.de}

\author{Irem Portakal}
\address{Max Planck Institute for Mathematics in the Sciences, Inselstra\ss e 22, 04103 Leipzig, Germany}
\email{mail@irem-portakal.de}

\begin{abstract}
   We investigate Spohn curves, the algebro-geometric models of totally mixed dependency equilibria for $2 \times 2$ normal-form games. These curves arise as the intersection of two quadrics in $\mathbb{P}^3$ and are generically elliptic curves. We examine the reduction of Spohn curves to plane curves, providing a full classification of conditions under which they are reducible. Notably, we prove that the real points are dense on the Spohn curve in all cases, which is relevant for applications. These computations are further supported by Macaulay2 and stored in Mathrepo. We review methods to compute the $j$-invariants of elliptic curves arising as the intersection of quadrics in $\mathbb P^3$ which we apply to the case of Spohn curves {aimed at game theorists}. We propose a definition of equivalence of generic $2\times 2$ games based on the $j$-invariant of the Spohn curve.
\end{abstract}

\maketitle

\tableofcontents

\section{Introduction}
\blfootnote{Acknowledgments: We thank Daniel Windisch, Lorenzo Baldi, Bernd Sturmfels and Máté Telek for useful conversations.}
In game theory, one is usually interested in the best possible solution of a game. More precisely, one wants to reach an equilibrium such that the choice of strategy of each player is optimal for them. 
The concept of \emph{Nash equilibria} fulfills this by giving strategies in which, if the players interact independently of each other, it makes no rational sense for any of them to deviate from the joint strategy while the others do not. Due to the assumption of independence, though, often these do not give desirable or ideal solutions. Wolfgang Spohn proposed a different concept of equilibria, the so-called \emph{dependency equilibria}, in which it is assumed that the players take into account their mutual expectations and try to maximize their conditional expected payoff (\cite{Spohn2},\cite{Spohn1}). 

This notion of equilibrium was first studied from an algebro-geometric point of view in \cite{PortakalSturmfels} and \cite{PortakalWindisch}, which characterize it, up to a certain point, via the so-called \emph{Spohn variety} $\V$. For generic games, the dimension and degree of the Spohn variety are known, as well as the fact that indeed all totally mixed Nash equilibria lie on the intersection of the open probability simplex, the Segre variety and $\V$. Moreover, it is proven that for generic games that any Nash equilibrium is a dependency equilibrium \cite[Corollary 3.20]{PortakalWindisch}. 

For $2\times 2$ games, i.e.\ games with two players in which both only have two choices, the Spohn variety generically takes the form of an \emph{elliptic curve} that is the intersection of two quadrics in $\mathbb P^3$, which can be reduced to a cubic in $\mathbb P^2$ resulting in the more familiar definition of the elliptic curve.  Namely, an elliptic curve over a field $K$ (which we take to be $\mathbb Q$) is a smooth projective curve of genus one with a marked point (at infinity). {If two elliptic curves over $\mathbb Q$ are isomorphic to each other, then they have same \emph{$j$-invariant}.  Determining isomorphisms of cubics in $\mathbb P^2$ directly can be computationally simplified by first computing the $j$-invariant and only checking for isomorphisms if the $j$-invariants are equal.
Clearly, dependency equilibria pose more of a computational challenge than Nash equilibria, where two-player games can be formulated as a linear algebra problem and they are generically zero-dimensional. Moreover, the case of $2\times2$ games constitutes a special case that must be treated separately, since for other finite generic games, the Spohn variety is rational \cite[Theorem 3.4]{PortakalSturmfels}. From an application-related point of view, of course, the interest lies in actual real solutions. This has only been addressed for generic games in \cite{PortakalWindisch} before. Non-generically, the question of the denseness of the real points inside the Spohn variety remains open. In this paper, we thoroughly examine all $2\times 2$ games and provide explicit computations. \\ 

This paper is structured as follows: In Section~\ref{sec:depeq}, relevant background on dependency equilibria and the Spohn variety is summarized. We prove in Proposition~\ref{prop: NE are DE} that for all $2 \times 2$ games, every Nash equilibrium is indeed a dependency equilibrium. We prove for prisoners' dilemma type of games, the unique Nash equilibrium is Pareto dominated by infinitely many dependency equilibria of the game (Proposition~\ref{prop: prisoners dilemma type}). Additionally, in Remark~\ref{rem: Pareto dom} we introduce a computational method to check whether the set of totally mixed Nash equilibria of a generic $2\times 2$ game is Pareto dominated by a dependency equilibrium. 

In Section~\ref{sec:denseness}, we investigate when the planar model of such a curve coming from a $2\times 2$ game is reducible (Theorem~\ref{thm: cases}), using properties of planar cubics and computations in Macaulay2 \cite{M2}. This happens in precisely 12 cases with an interesting combinatorial structure, which is explained in Remark~\ref{rem: combinatorialstructure}. We first use this to prove the denseness of real points for the planar cubic in Lemma~\ref{lem: denseness for plane cubic}, namely by finding a real smooth point in each irreducible component. Then, in Theorem~\ref{smooth real point in V}, we are able to pull back most of said points to components of $\V$ and deduce the same result for the Spohn variety in $\mathbb P^3$ in the aforementioned 12 cases. This is a first step towards the denseness of real points in the Spohn variety of non-generic games as addressed above. 

Section~\ref{sec:equivalentgames} presents a detailed overview of known results on elliptic curves and applies them to our case, i.e.\ to generic $2\times 2$-games. The goal of this section is to provide readers, especially with game theoretic backgrounds, with the necessary and sufficient established computational methods in the arithmetic of elliptic curves. We review an algorithm and code to compute the $j$-invariant of an elliptic curve that arises from the intersection of two quadrics in $\mathbb P^3$ with a known common rational point. This can be applied to elliptic curves which arise as the Spohn variety. We propose a game-theoretic equivalence between $2\times 2$ games based on the $j$-invariants of elliptic curves. In this section, we work with the cases when the payoff matrices, and thus the Spohn curve, are defined over $\mathbb Q$. In the case of real payoff matrices, Appendix~\ref{app:contfrac} explains how to obtain rational approximations to real numbers. 
\\[0.2cm]
All relevant code for this paper is hosted on the MathRepo page:
\begin{center}
   \begin{mdframed}\centering \url{https://mathrepo.mis.mpg.de/elliptic_curves_game_theory/}
   \end{mdframed}
\end{center}

\section{Dependency equilibria and the Spohn variety}
\label{sec:depeq}
We consider a normal-form game with $n$ players $1, \ldots,n$, where each player $i$ can choose from $d_i$ pure strategies $1, \ldots, d_i$. The outcome of the game depends on the choices of the players and is represented by the payoff tables $X^{(1)}, \ldots, X^{(n)}$. These are tensors of format $d_1 \times \ldots \times d_n$, such that, for each player $i$, the entry $X^{(i)}_{j_i \cdots j_n} \in \mathbb R$ of the payoff table $X^{(i)}$ specifies their payoff in the case that every player $l$ chooses pure strategy $j_l$. Formally, the game is then denoted by $X$ and is said to be a $(d_1 \times \ldots \times d_n)$-game in normal-form.
We may understand the players in such a game as probability variables with state space $[d_i]$ that decide the joint outcome. The probabilities of the joint decisions are recorded in the $d_1 \times \ldots \times d_n$-format tensor $p$. Its entries $p_{j_i \cdots j_n}$ are the probabilities that every player $l$ chooses the strategy $j_l$. Coming from an algebraic perspective, we view $p$ as an element in the projective space $\mathbb P^{d_1 \cdots d_n -1}$ over $\mathbb C$. By definition, $p$ must have non-negative real entries that sum up to $1$ and therefore lives in the projectivization $\overline{\Delta} \subset \mathbb P^{d_1 \cdots d_n -1}$ of the ($d_1\cdots d_n -1$)-dimensional probability simplex $\Delta_{d_1 \cdots d_n-1}$. We also define the open simplex $\Delta:=\Delta_{d_1\cdots d_n}^{\circ} \subset \mathbb P^{d_1 \cdots d_n -1}$ of probability tensors with nonzero entries.
\\[0.4cm]\noindent
The \emph{conditional expected payoff} of the $i$th player, conditioned on them choosing a certain pure strategy $k$ with respect to $p\in \overline{\Delta}$, is the sum

$$
\mathbb{E}^{(i)}_k (p) := 
 \sum\limits_{j_1=1}^{d_1} \cdots \widehat{\sum\limits_{j_i=1}^{d_i}} \cdots \sum\limits_{j_n=1}^{d_n} X^{(i)}_{j_1\cdots k \cdots j_n} \frac{p_{j_1\cdots k \cdots j_n}}{p_{+\ldots+k+ \ldots +}}.
$$
Here,
$$p_{+\ldots+k+ \ldots +}:= 
 \sum\limits_{j_1=1}^{d_1} \cdots \widehat{\sum\limits_{j_i=1}^{d_i}} \cdots \sum\limits_{j_n=1}^{d_n} p_{j_1\cdots k \cdots j_n},$$
with $k$ in the $i$-th position, is the probability that player $i$ actually chooses the pure strategy $k$.

While for Nash equilibria, players maximize their expected payoff, dependency equilibria are defined by players maximizing their conditional expected payoff. This allows for some form of commitment between the players, as discussed by Spohn in \cite[§2]{Spohn2}.

\begin{definition}[Dependency equilibrium, \cite{Spohn1}] \label{DESpohn}  
    A \emph{dependency equilibrium} is a joint probability distribution $p \in \overline{\Delta}$ that is the limit of a sequence $(p^{(r)})_{r \in \mathbb N}$ in $\Delta$ satisfying
    $$ \lim\limits_{r \to \infty} \mathbb E^{(i)}_k (p^{(r)}) \geq 
    \lim\limits_{r \to \infty} \mathbb E^{(i)}_{k'} (p^{(r)})   $$
    for all players $i \in \{ 1, \ldots n \}$ and all pure strategies $k,k' \in \{ 1, \ldots, d_i \}$ of player $i$ with $p_{+\ldots+k+ \ldots +}\neq~0$.   
\end{definition}

Defining dependency equilibria via a limit for some cases is necessary, since the denominators of the conditional expected payoffs may be zero. 
The authors of \cite{PortakalWindisch} propose a different, slightly stricter definition of dependency equilibria in the boundary cases. This different definition does for example not have the property that all Nash equilibria are dependency equilibria, which, for the definition of Spohn is shown for $2 \times 2$ games in Proposition~\ref{prop: NE are DE}.
For the most part of this paper, we will focus on {\emph{totally mixed dependency equilibria}}, which live in the open simplex $\Delta$, i.e.\ for which the joint probabilities $p_{j_1 \cdots j_n}$ are neither $0$ nor $1$. The totally mixed dependency equilibrium can therefore simply be described by the equations 
 $\mathbb E^{(i)}_k (p) = \mathbb E^{(i)}_{k'} (p) $
for all players $i$ and all pure strategies $k,k'$ of player $i$. 
By multiplying these equations by the denominators, one finds that the totally mixed dependency equilibria can be described via a determinantal variety. 

\begin{definition}
    The \emph{Spohn variety} $\V$ of a game $X$ is defined as the vanishing set of the $2 \times 2$ minors of the matrices $M_1, \ldots, M_n$, given by
    $$M_i(p) := 
    \begin{bmatrix}
    \vdots & \vdots \\
    p_{+\ldots+k+ \ldots +}& \sum\limits_{j_1=1}^{d_1} \cdots \widehat{\sum\limits_{j_i=1}^{d_i}} \cdots \sum\limits_{j_n=1}^{d_n} X^{(i)}_{j_1\cdots k \cdots j_n} p_{j_1\cdots k \cdots j_n} \\
    \vdots & \vdots
    \end{bmatrix}
    \in \mathbb{R}^{d_i \times 2}.
    $$
\end{definition}

In this paper, we will focus mainly on $2 \times 2$ games. For simplicity, we will denote the $2 \times 2$ payoff tables $X^{(1)}$ as $A$ and $X^{(2)}$ as $B$. Then, the conditional expected payoffs are given by

\begin{align*}
&\mathbb E_1^{(1)} (p) =  \frac{a_{11} p_{11} + a_{12} p_{12}}{p_{11}+p_{12}}, & \hspace{-2.7cm}
\mathbb E_2^{(1)} (p) =  \frac{a_{21} p_{21} + a_{22} p_{22}}{p_{21}+p_{22}}, \\
&\mathbb E_1^{(2)} (p) =  \frac{b_{11} p_{11} + b_{21} p_{21}}{p_{11}+p_{21}}, & \hspace{-2.5cm}
\mathbb E_2^{(2)} (p) =  \frac{b_{12} p_{12} + b_{22} p_{22}}{p_{12}+p_{22}}.
\end{align*}
If $p \in \Delta $ (or more general if $p \in \overline{\Delta}$ and  $p_{1+},p_{2+},p_{+1},p_{+2} \neq 0$), then $p$ is a dependency equilibrium if and only if 
$$p \in \V = \mathbb V(\det M_1,\det M_2)\subset \mathbb P_{\mathbb C}^3$$
for
$$
M_1 = \begin{bmatrix}
p_{11}+p_{12} & a_{11}p_{11}+ a_{12}p_{12} \\
p_{21}+p_{22} & a_{21}p_{21}+a_{22}p_{22} 
\end{bmatrix}, \hspace{0.3cm} 
M_2 = \begin{bmatrix}
p_{11}+p_{21} & b_{11}p_{11}+b_{21}p_{21} \\
p_{12}+p_{22} & b_{12}p_{12}+b_{22}p_{22} 
\end{bmatrix} .
$$
More precisely,
\begin{equation}
\label{eq:quadricexample}
\begin{split}
 \det M_1 
&= (a_{21}-a_{11})p_{11}p_{21} + (a_{22}-a_{11})p_{11}p_{22} + (a_{21}-a_{12})p_{12}p_{21} + (a_{22}-a_{12})p_{12}p_{22},\\
 \det M_2 
&= (b_{12}-b_{11}) p_{11}p_{12} + (b_{22}-b_{11})p_{11}p_{22} + (b_{12}-b_{21})p_{12}p_{21} + (b_{22}-b_{21})p_{21}p_{22}.
\end{split}
\end{equation}

\begin{example}[Prisoner's Dilemma]\label{PrisonersDilemma}
    Consider the $2 \times 2$ game with payoff tables
    $$
    A =
    \begin{pmatrix}
    2& 0 \\ 3 & 1
    \end{pmatrix}, \hspace{0.3cm}
    B=
    \begin{pmatrix}
    2 & 3 \\ 0 & 1
    \end{pmatrix}.
    $$
    The $2 \times 2$ games are commonly represented in bimatrix form, where each cell shows the payoffs for both players: the first value is the payoff for player 1, and the second value is for player 2.
    \[
\begin{array}{c|cc}
    & \text{Cooperate} & \text{Defect} \\
    \hline
    \text{Cooperate} & (2,\,2) & (0,\,3) \\
    \text{Defect}    & (3,\,0) & (1,\,1) \\
\end{array}
\]
    The Spohn variety is defined by the determinants of the matrices  namely:

    \begin{align*}
    M_1 = \begin{bmatrix}
    p_{11}+p_{12} & 2p_{11} \\
    p_{21}+p_{22} & 3p_{21}+p_{22} 
    \end{bmatrix}, \hspace{0.3cm}
    M_2 = \begin{bmatrix}
    p_{11}+p_{21} & 2p_{11} \\
    p_{12}+p_{22} & 3p_{12}+p_{22} 
    \end{bmatrix},
    \end{align*}

    \begin{align*}
    \V =
    \mathbb V(
     p_{11}p_{21} - p_{11}p_{22} + 3 p_{12}p_{21} + p_{12}p_{22},
    p_{11}p_{12} -p_{11}p_{22} +3 p_{12}p_{21} +p_{21}p_{22}).
    \end{align*}
\end{example}

The Spohn variety for generic games has been studied in detail in \cite[§4]{PortakalSturmfels}. We state some of the key results here. In particular, Theorem~\ref{parametrization} explains why the case of $2 \times 2$ games is unique and needs to be studied separately.

\begin{theorem}\cite[Theorem 6]{PortakalSturmfels}\label{properties}
If the payoff tables of a game are generic, then the Spohn variety $\V$ is irreducible of codimension $d_1 + \ldots + d_n -n$ and degree $d_1 \cdots d_n$. The intersection of $\V$ with the Segre variety in the open simplex $\Delta$ is precisely the set of totally mixed Nash equilibria for the game $X$.
\end{theorem}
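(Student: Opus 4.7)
The plan is to handle the three assertions—codimension, degree, and irreducibility of $\V$, plus the Nash equilibrium characterization—separately, working one player matrix at a time and then combining via a genericity argument. For each player $i$, the matrix $M_i(p)$ is $d_i \times 2$ with entries linear in the coordinates of $p$, and the rank-at-most-one locus of such a matrix in $\mathbb{P}^{d_1 \cdots d_n - 1}$ is the pullback of the Segre variety $\mathbb{P}^1 \times \mathbb{P}^{d_i - 1} \hookrightarrow \mathbb{P}^{2d_i - 1}$ under the linear map $p \mapsto M_i(p)$. Since this Segre variety has codimension $d_i - 1$ and degree $d_i$, it would be natural to argue that for generic payoff tables the linear map is sufficiently transverse to the Segre variety, so that $\mathbb{V}(\text{minors of } M_i)$ is irreducible of codimension $d_i - 1$ and degree $d_i$ in $\mathbb{P}^{d_1 \cdots d_n - 1}$. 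One would verify this by computing on tangent spaces at a chosen smooth point of the Segre variety, using the freedom in the entries of $X^{(i)}$.

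Next, I would prove that the $n$ loci $\mathbb{V}(\text{minors of } M_i)$ intersect properly, so that their codimensions add to $\sum_i (d_i - 1) = d_1 + \cdots + d_n - n$. This is the main obstacle: each locus is a pullback from a different $d_i \times 2$ block of linear forms, and one must rule out excess intersection in the tensor product space. The natural tool is a Bertini-type argument: set up the incidence correspondence over the product of all payoff-table parameter spaces, verify that the general fiber has the expected dimension, and conclude that the locus of payoff tables giving non-transverse intersection has positive codimension. Once proper intersection is established, the degree formula $\deg(\V) = \prod_i d_i = d_1 \cdots d_n$ follows from the multiplicativity of degree under proper intersection in projective space. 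Irreducibility of $\V$ itself would then follow either from connectedness together with smoothness at a chosen general point, or by exhibiting a dense unirational parametrization.

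For the Nash equilibrium characterization, I would proceed by direct computation. On the Segre variety inside the open simplex $\Delta$, a probability tensor factors as $p_{j_1 \cdots j_n} = \prod_{l=1}^n q_l^{(j_l)}$ for positive marginals $q_l$. Substituting this into the conditional expected payoff $\mathbb{E}^{(i)}_k(p)$ causes the $q_i^{(k)}$ factors to cancel between numerator and denominator, reducing it to the ordinary expected payoff of player $i$ playing pure strategy $k$ against the product of the other players' marginals. The Spohn equations $\mathbb{E}^{(i)}_k(p) = \mathbb{E}^{(i)}_{k'}(p)$ then become the classical indifference conditions characterizing a totally mixed Nash equilibrium, which gives the inclusion in both directions.

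The hardest step is the proper-intersection argument in the second paragraph: each individual determinantal locus is standard, and the Nash equilibrium part is essentially a substitution, but controlling how the $n$ pulled-back loci meet in the large ambient $\mathbb{P}^{d_1 \cdots d_n - 1}$ requires a careful genericity analysis in the payoff parameter space. A further subtlety is that this clean picture is what singles out the $2 \times 2$ case as special: the codimension formula gives an expected dimension of $3 - 2 = 1$, so that $\V$ is a curve, and the degree formula gives $4$, consistent with the intersection of two quadrics in $\mathbb{P}^3$ described in equation~(\ref{eq:quadricexample}).
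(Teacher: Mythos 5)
The paper does not actually prove Theorem~\ref{properties}; it is quoted verbatim as \cite[Theorem~6]{PortakalSturmfels} and used as a black box, so there is no internal proof to compare against. Your outline can therefore only be judged on its own merits.

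The Nash-equilibrium part of your argument is correct and complete: on the Segre locus inside the open simplex one has $p_{j_1\cdots j_n}=\prod_l q_l^{(j_l)}$, the factor $q_i^{(k)}$ cancels between the numerator of $\mathbb{E}^{(i)}_k$ and the marginal $p_{+\ldots+k+\ldots+}$, and the Spohn equations reduce to the classical indifference conditions. The geometric plan for codimension, degree and irreducibility is the right big picture, but two points remain genuinely unresolved rather than merely deferred. First, the linear map $p\mapsto M_i(p)$ into the $d_i\times 2$ matrix space is \emph{not} a fully general linear map: its first column is the fixed marginalization map, independent of $X^{(i)}$, and only the second column varies with the payoff table. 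Your transversality argument "using the freedom in the entries of $X^{(i)}$" must account for the fact that only half of the linear forms can be perturbed, and you need to check that the fixed half does not force a degenerate intersection with the Segre variety $\mathbb{P}^1\times\mathbb{P}^{d_i-1}$. Second, the properness of the intersection of the $n$ determinantal loci is the crux of the theorem and is asserted via an unspecified Bertini-type incidence argument; without exhibiting at least one game $X$ for which the intersection has the expected dimension (which suffices by upper semicontinuity of fiber dimension), the argument is incomplete. An alternative and arguably cleaner route, closer in spirit to Theorem~\ref{parametrization}, is to use the fibration of $\V$ over $(\mathbb{P}^1)^n$ given by the slopes $\pi_i$ making the second column of $M_i$ proportional to the first; the fibers are linear spaces of computable dimension, which yields irreducibility and the codimension directly, with the degree following from the Konstanz-matrix determinant. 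Finally, your closing remark about the $2\times 2$ case being singled out by the dimension count is a bit misleading: the count $d_1+\cdots+d_n-n$ makes $\V$ a curve also for, say, $2\times 2\times 1$ formats, and what is really special about $n=d_1=d_2=2$ is the failure of rationality established in Theorem~\ref{parametrization}, not the dimension itself.
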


\begin{theorem}\cite[Theorem 8]{PortakalSturmfels}\label{parametrization}
If $n=d_1=d_2=2$ then the Spohn variety $\V$ is an elliptic curve. In all other cases, the Spohn variety is rational, represented by a map onto $(\mathbb P^1)^n$ with linear fibers.
\end{theorem}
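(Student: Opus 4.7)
The plan is to split on whether $n=d_1=d_2=2$ or not, since the target $(\mathbb P^1)^n$ is too large in this one special case for the fibration to be dominant.

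For the $2\times 2$ case, $\V$ is cut out in $\mathbb P^3$ by the two explicit quadrics $\det M_1,\det M_2$ of \eqref{eq:quadricexample}. I would first verify by a Bertini-type argument in the $8$-dimensional space of payoff coefficients that, for a generic choice of payoffs, these two quadrics meet in a smooth, irreducible curve. Bezout's theorem then gives $\deg\V=4$, and adjunction yields $\omega_\V\cong\mathcal O_{\mathbb P^3}(-4+2+2)|_\V\cong\mathcal O_\V$. A smooth projective curve with trivial canonical sheaf has genus one, so $\V$ is an elliptic curve.

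For every other format $(n,d_1,\ldots,d_n)$ the key construction is the rational map
\[
\pi:\V\dashrightarrow (\mathbb P^1)^n,\qquad p\longmapsto(\lambda_1(p),\ldots,\lambda_n(p)),
\]
where $\lambda_i(p)\in\mathbb P^1$ records the proportionality ratio between the two columns of $M_i(p)$, which is well defined on $\V$ since the defining minors force $\mathrm{rank}\,M_i\le 1$. Fixing a target $(\lambda_1,\ldots,\lambda_n)$ and rewriting $\mathrm{col}_2 M_i(p)=\lambda_i\,\mathrm{col}_1 M_i(p)$ produces the linear system
\[
\sum_{j_1,\ldots,\widehat{j_i},\ldots,j_n} X^{(i)}_{j_1\cdots k\cdots j_n}\,p_{j_1\cdots k\cdots j_n}\;-\;\lambda_i\,p_{+\ldots+k+\ldots+}\;=\;0 \qquad(i\in[n],\,k\in[d_i])
\]
in the homogeneous coordinates of $p$. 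Since these $\sum_i d_i$ linear equations automatically imply that the columns of every $M_i$ are proportional, each fiber of $\pi$ is exactly a linear subspace of $\mathbb P^{\prod d_i-1}$, of expected dimension $\prod_i d_i-1-\sum_i d_i=\dim\V-n$ by Theorem~\ref{properties}. This number is non-negative precisely when $(n,d_1,\ldots,d_n)\ne(2,2,2)$, which is what allows the fibration picture at all.

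The remaining, and main, obstacle is to show that $\pi$ is actually dominant in every non-$2\times 2$ case; I would handle this by exhibiting, for generic target $(\lambda_1,\ldots,\lambda_n)$, an explicit probability tensor, for instance a product distribution with freely chosen marginals, whose conditional expected payoffs realize the prescribed $\lambda_i$'s. Once dominance is in place, $\V$ fibers over the rational base $(\mathbb P^1)^n$ with rational (linear) fibers, and a standard birational trivialization gives $\V\sim_{\mathrm{bir}}(\mathbb P^1)^n\times\mathbb P^{\dim\V-n}$, proving rationality. A secondary technicality, smoothness and irreducibility of the complete intersection in the $2\times 2$ case for a generic choice of payoffs, can be settled by a one-time discriminant inspection; the classification of its failure modes is precisely the subject of Section~\ref{sec:denseness}.
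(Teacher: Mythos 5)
The paper does not prove this result — it is cited from Portakal--Sturmfels — so there is no in-house proof to compare against. Your skeleton is the standard one: adjunction for the $2\times 2$ complete intersection of quadrics, and, for all other formats, the rank-one-ratio map $\pi:\V\dashrightarrow(\mathbb P^1)^n$ with linear fibers whose expected dimension is $\prod_i d_i-1-\sum_i d_i$. The adjunction computation and the fiber-dimension count are correct; the one cosmetic omission in the genus-one half is that you never name the distinguished rational point (e.g.\ $[0:0:0:1]$, used repeatedly in this paper) that promotes the genus-one curve to an elliptic curve in the paper's sense.

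The genuine gap is the dominance step, and the specific mechanism you propose does not work. A product distribution $p=q^{(1)}\otimes\cdots\otimes q^{(n)}$ lies on $\V$ only when, for every player $i$, the conditional expected payoffs $\mathbb E^{(i)}_k(p)=\sum_{j_{\neq i}}X^{(i)}_{j_1\cdots k\cdots j_n}\prod_{l\neq i}q^{(l)}_{j_l}$ are \emph{independent of} $k$ — i.e.\ precisely when $p$ is a totally mixed Nash equilibrium. Generically these are finitely many points, and for each of them the value $\lambda_i$ is the (fixed) Nash payoff; their image in $(\mathbb P^1)^n$ is therefore a finite set, nowhere near dense. You cannot "freely choose marginals" and stay on $\V$. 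What one should do instead is argue at the level of the linear system cutting out $F_\lambda$: the matrix of the $\sum_i d_i$ equations $\sum X^{(i)}_{\cdots k\cdots}p_{\cdots k\cdots}-\lambda_i\,p_{+\cdots k\cdots+}=0$ depends linearly on $\lambda$, and exhibiting a single $\lambda^\ast$ for which those equations are linearly independent (a finite-dimensional, degenerate-limit check) shows they are independent for generic $\lambda$; then $F_\lambda$ is a nonempty linear space of exactly the expected dimension over a dense open subset of $(\mathbb P^1)^n$, each $F_\lambda$ sits inside $\V$, and the irreducibility and dimension statement of Theorem~\ref{properties} forces $\pi$ to be dominant with $F_\lambda$ as its generic fiber. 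Your concluding birational-trivialization step is then fine (the fibers are kernels of a matrix, so a rational section exists).
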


It is important to bear in mind that Theorems~\ref{properties} and \ref{parametrization} above are established for generic games and do not necessarily apply to specific games. For example, in the Prisoner's Dilemma from Example~\ref{PrisonersDilemma}, the Spohn curve is both reducible and singular. In addition to Theorem~\ref{properties}, it is natural to ask whether Nash equilibria on the boundary of the simplex are also dependency equilibria according to Definition~\ref{DESpohn}. Furthermore, one may investigate dependency equilibria that Pareto dominate Nash equilibria. The following result appears in \cite{Spohn2} without a formal proof. 
\begin{proposition}\label{prop: NE are DE}
A Nash equilibrium is a dependency equilibrium for any $2\times 2$-game.

\begin{proof}
Nash equilibria can be computed following \cite[Section 6]{NE}.
Let $p_k^{(i)}$ be the probability of player $i$ choosing $k \in [2]$. The set of Nash equilibria of $2 \times 2$-games are thus contained in $\overline{\Delta_1} \times \overline{\Delta_1}$. For a totally mixed Nash equilibrium, the result follows from Theorem~\ref{properties}.
Without loss of generality consider a Nash equilibrium $((0,1),(p^{(2)}_1, p^{(2)}_2)) \in \overline{\Delta_1} \times \overline{\Delta_1}$ and identify it with its image under the Segre embedding $p:=(p_{11}, p_{12}, p_{21}, p_{22})=(0,0,p^{(2)}_1, p^{(2)}_2) \in \overline{\Delta} = \Delta_3$.

Let $p^{(2)}_1, p^{(2)}_2 \neq 0 $. Then, since the payoff is linear in each distribution and usually takes its optimal value at $p^{(2)} \in \{ (1,0), (0,1) \}$,  for $p$ to be a Nash equilibrium, it must be $a_{21}p^{(2)}_1+a_{22}p^{(2)}_2 \geq a_{11}p^{(2)}_1 + a_{12}p^{(2)}_2$ and $b_{21}=b_{22}$.
Assume $a_{11} \leq a_{12}$. We can write $a_{12}= a_{11}+s$. Define a sequence 
$(p^{(r)})_{r \in \mathbb N_{\geq m}}$ in $\overline{\Delta}$ by 
$p^{(r)} = (\frac{1}{r}, \frac{1}{r^2}, p^{(2)}_1 - \frac{1}{r}, p^{(2)}_2- \frac{1}{r^2})$, 
where $m$  is chosen sufficiently large for this to be a distribution. 
Then $p^{(r)} \stackrel{r \to \infty}{\longrightarrow} p$ and
\begin{align*}
\lim\limits_{r \to \infty} \frac{b_{11} p_{11}^{(r)} + b_{21} p_{21}^{(r)}}{p_{11}^{(r)}+p_{21}^{(r)}} = b_{21} = b_{22} =
\lim\limits_{r \to \infty}  \frac{b_{12} p_{12}^{(r)} + b_{22} p_{22}^{(r)}}{p_{12}^{(r)}+p_{22}^{(r)}},
\end{align*}
\begin{align*}
\lim\limits_{r \to \infty} \frac{a_{11} p^{(r)}_{11} + a_{12} p^{(r)}_{12}}{p^{(r)}_{11}+p^{(r)}_{12}}
= a_{11} + \lim\limits_{r \to \infty} \frac{s p_{12}^{(r)}}{p_{11}^{(r)}+ p_{12}^{(r)}} = a_{11} + \lim\limits_{r \to \infty} \frac{s}{r+1} = a_{11} \\
\leq a_{11}p^{(2)}_1 + a_{12}p^{(2)}_2
\leq a_{21} p_1^{(2)} + a_{22} p_2^{(2)} =
\lim\limits_{r \to \infty} \frac{a_{21} p^{(r)}_{21} + a_{22} p^{(r)}_{22}}{p^{(r)}_{21}+p^{(r)}_{22}}.
\end{align*}
If $a_{12} \leq a_{11}$ define the sequence $(p^{(r)})_{r \in \mathbb N_{\geq m}}$ by $p^{(r)} =  (\frac{1}{r^2}, \frac{1}{r}, p^{(2)}_1 - \frac{1}{r}, p^{(2)}_2- \frac{1}{r^2})$ and proceed similarly.

Now assume without loss of generality $p^{(2)} = (0,1)$. Then for $p$ to be a Nash equilibrium, it must be $a_{12} \leq a_{22}$ and $b_{21} \leq b_{22}$.
Depending on the other entries of the payoff matrices, we define a sequence $(p^{(r)})_{r \in \mathbb N_{\geq m}}$. In each case, we take $r$ starting at a sufficiently large integer for $p^{(r)}$ to be a distribution. 
\begin{itemize}
\item $a_{11} \leq a_{12}$ and $b_{11} \leq b_{21}$: $p^{(r)} = (\frac{1}{r}, \frac{1}{r^2}, \frac{1}{r^2} , 1- \frac{1}{r}- \frac{2}{r^2})$.
\item $a_{11} \geq a_{12}$ and $b_{11} \geq b_{21}$: $p^{(r)} = (\frac{1}{r^2}, \frac{1}{r}, \frac{1}{r} , 1- \frac{2}{r}- \frac{1}{r^2}) $.
\item $a_{11} \leq a_{12}$ and $b_{11} \geq b_{21}$: $p^{(r)} = (\frac{1}{r^2}, \frac{1}{r^3}, \frac{1}{r} , 1- \frac{1}{r}- \frac{1}{r^2} - \frac{1}{r^3}) $.
\item $a_{11} \geq a_{12}$ and $b_{11} \leq b_{21}$: $p^{(r)} = (\frac{1}{r^2}, \frac{1}{r}, \frac{1}{r^3} , 1- \frac{1}{r}- \frac{1}{r^2} - \frac{1}{r^3}) $.
\end{itemize}

\noindent Then, $p^{(r)} \stackrel{r \to \infty}{\longrightarrow} p$ and one obtains the desired inequalities:

\begin{align*}
\lim\limits_{r \to \infty} \frac{a_{11} p^{(r)}_{11} + a_{12} p^{(r)}_{12}}{p^{(r)}_{11}+p^{(r)}_{12}}
= \min \{a_{11}, a_{12} \} \leq a_{22} =
\lim\limits_{r \to \infty} \frac{a_{21} p^{(r)}_{21} + a_{22} p^{(r)}_{22}}{p^{(r)}_{21}+p^{(r)}_{22}}, \end{align*}
\begin{align*}
\lim\limits_{r \to \infty} \frac{b_{11} p_{11} + b_{21} p_{21}}{p_{11}+p_{21}} = \min \{b_{11},b_{21} \} \leq b_{22} =
\lim\limits_{r \to \infty}  \frac{b_{12} p_{12} + b_{22} p_{22}}{p_{12}+p_{22}}.
\end{align*}

\end{proof}
\end{proposition}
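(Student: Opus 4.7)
The plan is to stratify Nash equilibria by how many coordinates of their Segre image vanish. If $p$ is a totally mixed Nash equilibrium, then Theorem~\ref{properties} places $p$ in $\V \cap \Delta$, and every point of $\V \cap \Delta$ is automatically a totally mixed dependency equilibrium with no limiting sequence required. The work therefore lies in the boundary cases.

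A boundary Nash equilibrium is a product $(q^{(1)}, q^{(2)}) \in \overline{\Delta_1} \times \overline{\Delta_1}$ in which at least one factor has a zero entry. By the symmetry between the two players, I would reduce to two situations: (a) exactly one player (say player $1$) plays a pure strategy while the other is properly mixed, and (b) both players play pure strategies. In each situation, the goal is to exhibit an explicit sequence $p^{(r)} \in \Delta$ converging to the Segre image of the equilibrium such that the four limiting conditional expected payoffs satisfy the inequalities of Definition~\ref{DESpohn}.

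The technical heart of the construction is rate control. For a sequence of the form $p^{(r)}_{11} = r^{-\alpha}$ and $p^{(r)}_{12} = r^{-\beta}$, the ratio $p^{(r)}_{11}/(p^{(r)}_{11} + p^{(r)}_{12})$ tends to $1$ when $\alpha < \beta$ and to $0$ when $\alpha > \beta$, so $\lim \mathbb{E}^{(1)}_1(p^{(r)})$ equals $a_{11}$ in the first case and $a_{12}$ in the second. By assigning the slower decay rate to the coordinate whose coefficient is smaller, I can force this limit to be $\min\{a_{11}, a_{12}\}$, and analogously for the other conditional expected payoffs. In situation (a), player $2$ being properly mixed at a Nash equilibrium forces $b_{21} = b_{22}$, which makes both player-$2$ inequalities automatic in the limit; the player-$1$ inequality then follows from the chain $\min\{a_{11}, a_{12}\} \leq a_{11} q^{(2)}_1 + a_{12} q^{(2)}_2 \leq a_{21} q^{(2)}_1 + a_{22} q^{(2)}_2$, the final step being exactly the Nash condition.

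In situation (b), after a WLOG choice of the pure equilibrium corner the Nash conditions degenerate to $a_{12} \leq a_{22}$ and $b_{21} \leq b_{22}$, and three of the four $p$-coordinates vanish. Here I would split into four sub-cases according to the signs of $a_{11} - a_{12}$ and $b_{11} - b_{21}$, and in each sub-case assign decay rates $r^{-1}, r^{-2}, r^{-3}$ to the three vanishing coordinates so that the limiting conditional expected payoffs of player $1$'s first row and player $2$'s first column both equal the appropriate minima. The main obstacle I expect is the combinatorial bookkeeping of these four sub-cases: each prescribed sequence must simultaneously be a probability distribution for large $r$, converge to $p$, and deliver the correct limiting value on all four conditional expected payoffs at once, and the correct assignment of rates changes from sub-case to sub-case.
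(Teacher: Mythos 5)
Your proposal follows essentially the same strategy as the paper's proof: reduce totally mixed equilibria to Theorem~\ref{properties}, split boundary equilibria into the one-player-pure and both-players-pure cases, extract the same Nash inequalities ($b_{21}=b_{22}$ resp.\ $a_{12}\le a_{22}$, $b_{21}\le b_{22}$), and build witnessing sequences in $\Delta$ by assigning slower polynomial decay rates to the coordinates whose payoff coefficients are smaller so that the limiting conditional payoffs hit the desired minima. The paper's explicit sequences also sometimes use equal rates where only a partial ordering of decays is needed, but your proposed strict $r^{-1},r^{-2},r^{-3}$ assignment works in each of the four sub-cases as well, so the approaches coincide.
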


\begin{proposition}\label{prop: prisoners dilemma type}
Let $X$ be a Prisoner's Dilemma-type $2 \times 2$ game, represented in bimatrix form as follows:
\[
\begin{array}{c|cc}
    & \text{Cooperate} & \text{Defect} \\
    \hline
    \text{Cooperate} & (a_{11},\, a_{11}) & (a_{12},\, a_{21}) \\
    \text{Defect}    & (a_{21},\, a_{12}) & (a_{22},\, a_{22}) \\
\end{array}
\]
where the payoffs satisfy $a_{21} > a_{11} > a_{22} > a_{12}$. Then, there exist infinitely many dependency equilibria that Pareto dominate the unique Nash equilibrium $(0,0,0,1) \in \overline{\Delta}$ of $X$. In particular, the point $(1, 0, 0, 0) \in \overline{\Delta}$, corresponding to both players choosing to cooperate, is a dependency equilibrium.
\end{proposition}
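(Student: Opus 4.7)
The plan is to split the claim into three steps: identify the unique Nash equilibrium, certify $(1,0,0,0)$ as a dependency equilibrium that Pareto dominates it, and then produce a continuous family of further dependency equilibria around $(1,0,0,0)$ by exploiting the symmetry $B = A^{\top}$ of the game.

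For the first step, the hypotheses $a_{21} > a_{11}$ and $a_{22} > a_{12}$ make ``Defect'' a strictly dominant pure strategy for each player, which excludes any Nash equilibrium other than the pure profile $(0,0,0,1)$, with payoff pair $(a_{22}, a_{22})$. For the second step I would imitate the construction used in the proof of Proposition~\ref{prop: NE are DE} and take the sequence
\[
p^{(r)} = \left(1 - \tfrac{1}{r} - \tfrac{1}{r^2} - \tfrac{1}{r^3},\ \tfrac{1}{r^2},\ \tfrac{1}{r^3},\ \tfrac{1}{r}\right) \in \Delta,
\]
which is a probability distribution for $r$ sufficiently large and converges to $(1,0,0,0)$. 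The different polynomial rates are calibrated so that $p_{11} \gg p_{12}$ and $p_{22} \gg p_{21}$, which gives $\mathbb{E}^{(1)}_1(p^{(r)}) \to a_{11}$ and $\mathbb{E}^{(1)}_2(p^{(r)}) \to a_{22}$; the analogous computation for player $2$ (using $b_{11}=a_{11}$, $b_{22}=a_{22}$) gives $\mathbb{E}^{(2)}_1(p^{(r)}) \to a_{11}$ and $\mathbb{E}^{(2)}_2(p^{(r)}) \to a_{22}$. Since $a_{11} > a_{22}$ and the marginal of $k=2$ vanishes in the limit for both players (so only the inequalities for $k=1$ carry content), Definition~\ref{DESpohn} is satisfied and $(1,0,0,0)$ is a dependency equilibrium; both players there collect $a_{11} > a_{22}$, so Pareto dominance is immediate.

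For the third step, substituting $p_{12} = p_{21} = y$, $p_{11} = x$, $p_{22} = z$ into~\eqref{eq:quadricexample} collapses $\det M_1$ and $\det M_2$ to the same polynomial, so $\V$ restricted to the plane $\{p_{12} = p_{21}\} \cong \mathbb{P}^2$ is a single conic $C$. Viewed as a quadratic in $y$ with coefficients depending on $(x,z)$, the product of its roots is $\tfrac{(a_{22}-a_{11})xz}{a_{21}-a_{12}}$, which is strictly negative for $x,z>0$, so there is exactly one positive root $y(x,z)$; this $y$ depends continuously on $(x,z)$ and tends to $0$ as $(x,z) \to (1,0)$. Normalizing the resulting point $(x,y(x,z),y(x,z),z)$ produces a continuous one-parameter family in $\Delta \cap \V$, i.e.\ of totally mixed dependency equilibria accumulating at $(1,0,0,0)$. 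Because the expected payoff is linear in $p$ and equals $a_{11}$ at $(1,0,0,0)$, continuity keeps both players' payoffs strictly above $a_{22}$ on an open neighborhood in $C$, yielding infinitely many Pareto-dominating dependency equilibria.

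The main obstacle is the bookkeeping in the second step: the four polynomial rates in $p^{(r)}$ must be chosen so that each of the four conditional expected payoffs converges to a value compatible with the inequalities of Definition~\ref{DESpohn}, and the correct exponents are dictated in a delicate way by the full ordering $a_{21} > a_{11} > a_{22} > a_{12}$. By contrast, the symmetry reduction in the third step is a one-line substitution, after which only routine continuity is needed.
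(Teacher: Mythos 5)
Your proof is correct and takes a genuinely different, and arguably more explicit, route than the paper. For the first part, the paper appeals to the Konstanz matrix $K_X(\Pi)$ from \cite[Theorem 19]{PortakalSturmfels} to observe that $\pi_1 - \pi_2$ divides the determinant, so the payoff curve contains the diagonal segment joining $(a_{22},a_{22})$ to $(a_{11},a_{11})$, and then argues that the fiber over this diagonal meets $\Delta$ in a positive-dimensional set. You instead work directly with the Spohn equations and notice that the symmetry $B = A^{\top}$ makes $\det M_1$ and $\det M_2$ coincide on the hyperplane $\{p_{12}=p_{21}\}$, so the restriction of $\V$ to that plane is a single conic on which you can explicitly solve for the positive root $y(x,z)$ and verify it tends to $0$ as $(x,z)\to(1,0)$. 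These are really the same symmetry in two coordinate systems --- the slice $p_{12}=p_{21}$ is precisely the fiber over $\pi_1=\pi_2$ --- but your version writes down the dependency equilibria themselves rather than only their payoffs, and avoids the black-box appeal to the kernel of $K_X$, so it is more self-contained. For the second part, your sequence $p^{(r)} = \bigl(1-\tfrac1r-\tfrac1{r^2}-\tfrac1{r^3},\ \tfrac1{r^2},\ \tfrac1{r^3},\ \tfrac1r\bigr)$ is simpler than the paper's $\lambda$-calibrated one and does the job: the ordering of rates makes $p_{22}$ asymptotically dominate both $p_{12}$ and $p_{21}$, so all four conditional payoffs converge to $a_{11}$ or $a_{22}$, and since only the marginal for pure strategy~$1$ is nonzero at $(1,0,0,0)$, the only inequalities Definition~\ref{DESpohn} requires are $a_{11}\geq a_{22}$ twice, which hold with room to spare. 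In fact your choice sidesteps a delicate point in the paper's sequence, which sets $p_{12}^{(r)}=\tfrac1r \gg p_{22}^{(r)}=\tfrac{1-\lambda}{r^2}$ and therefore pushes $\mathbb{E}^{(2)}_2(p^{(r)}) \to b_{12}=a_{21}$, not $a_{22}$: one should double-check there that the inequality $\lim \mathbb{E}^{(2)}_1 \geq \lim \mathbb{E}^{(2)}_2$ is really satisfied with those rates. Your simpler calibration, which makes $p_{22}$ the slowest term, avoids that concern entirely.
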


\begin{proof}
By \cite[Theorem 19]{PortakalSturmfels}, the determinant of the Konstanz matrix $K_X(\Pi)$ of the game $X$ is the image of the Spohn curve $\mathcal{V}_X$ under the map $\pi_X$ defined by the expected payoffs of each player $(\pi_1, \pi_2)$ which is as follow:
\[
K_X(\Pi) = 
\begin{bmatrix}
\pi_1 - a_{11} & \pi_1 - a_{12} & 0 & 0 \\
0 & 0 & \pi_1 - a_{21} & \pi_1 - a_{22} \\
\pi_2 - a_{11} & 0 & \pi_2 - a_{12} & 0 \\
0 & \pi_2 - a_{21} & 0 & \pi_2 - a_{22}
\end{bmatrix}
\]
The ideal defined by $\text{det}(K_X(\Pi))$ contains the component $\pi_1 - \pi_2$ that passes through $(a_{11},a_{11})$ and $(a_{22},a_{22})$ in $\pi_X(\overline{\Delta})$. The intersection of the kernel of $K_X(\Pi)$ for this component with the open simplex $\Delta$ is not empty and positive-dimensional. Thus, the first statement follows. For the second statement, for \( r \geq 2 \), let
\[
p^{(r)} = \left(1 - \frac{1}{r} - \frac{\lambda}{r^2} - \frac{1-\lambda}{r^2},\; \frac{1}{r},\; \frac{\lambda}{r^2},\; \frac{1-\lambda}{r^2} \right)
\]
where
\[
\lambda = \frac{a_{11} - a_{22}}{a_{21} - a_{22}} \in (0,1).
\]
This sequence converges to \( (1,0,0,0) \) and satisfies the following equality and inequality:
\begin{align*}
\lim_{r \to \infty} \mathbb{E}_1^{(1)}(p^{(r)}) &= a_{11} = \lambda a_{21} + (1-\lambda)a_{22} = \lim_{r \to \infty} \mathbb{E}_2^{(1)}(p^{(r)}), \\[0.5em]
\lim_{r \to \infty} \mathbb{E}_1^{(2)}(p^{(r)}) &= a_{11} > a_{22} = 
\lim_{r \to \infty} \mathbb{E}_2^{(2)}(p^{(r)}).
\end{align*}
\end{proof}
\begin{remark}\label{rem: Pareto dom}
While generically, Nash equilibria consist of finitely many points, the Spohn variety clearly does not. It is natural to ask if this means that by looking at the dependency equilibria, one can always find a better outcome than the one coming from Nash equilibria. The \emph{payoff curve}, as defined in \cite{PortakalSturmfels}, describes the payoffs coming from totally mixed dependency equilibria. It can be computed via the determinant of the Konstanz matrix, the matrix that gives the parametrization in Theorem~\ref{parametrization}.  For any two points on the curve, if one has higher payoffs for both players, it Pareto dominates the other one. 
For generic $2 \times 2$ games, there is only one totally mixed Nash equilibrium, which can be computed as in \cite[Theorem 6.6]{NE}. Using quantifier elimination in \texttt{Mathematica} \cite{Mathematica}, we propose \href{https://mathrepo.mis.mpg.de/elliptic_curves_game_theory/index.html#computations-for-section-2}{{\color{magenta} a method}} in \cite{mathrepo} to check whether this Nash point is Pareto dominated by a dependency equilibrium.
\end{remark}

\section{Denseness of real points}
\label{sec:denseness}
When we are interested in dependency equilibria, we are automatically interested in the Spohn variety. This is, because, as seen above, dependency equilibria for which the denominators of the conditional expected payoff do not vanish can be completely described as the intersection of the Spohn variety with the open probability simplex $\Delta$. From an algebraic viewpoint, it is much nicer to work with a variety, especially one we already know basic properties about, as seen above. The discrepancy between studying the Spohn variety and actual dependency equilibria arises not only from the boundary cases but also from the fact that we consider the Spohn variety as a projective variety over the complex numbers, while the probabilities making up the dependency equilibria of course have to be real. We can easily bridge this gap, and for example adopt statements of dimension and degree to the real part, if we can show that the real points lie dense within the Spohn variety.
In general, this can be achieved via the parametrization in Theorem~\ref{parametrization}. We need to focus on the case of $2 \times 2$ games separately, in which the Spohn variety is a curve in $\mathbb P^3$ given by two quadrics.

\subsection{Reducibility of the Spohn cubic}
\label{sec:redspohn}
For a $2 \times 2$ game with payoff matrices $X^{(1)}=(a_{ij})$, $X^{(2)}=(b_{ij})$, the Spohn variety is defined by the two quadrics $\det(M_1)$ and $\det(M_2)$. We may eliminate $p_{2,2}$ in these equations while the elliptic curve remains the same up to isomorphy.
After a relabeling of the variables to $x=p_{11}, y=p_{12}, z=p_{21}$ for simplicity, the resulting planar model is the ternary cubic $\mathcal{C} \subset \mathbb P^2$, given by
\begin{equation}\label{eq: Spohn cubic}
f = c_1x^2y +c_2x^2z + c_3xy^2 + c_4xz^2 + c_5y^2z + c_6yz^2 + c_7xyz,
\end{equation}
where
\begin{align*}
c_1 &= (a_{11}-a_{22})(b_{11}-b_{12}), \\
c_2 &= (a_{11}-a_{21})(b_{22}-b_{11}), \\
c_3 &= (a_{12}-a_{22})(b_{11}-b_{12}), \\
c_4 &= (a_{11}-a_{21})(b_{22}-b_{21}), \\
c_5 &= (a_{12}-a_{22})(b_{21}-b_{12}), \\
c_6 &= (a_{12}-a_{21})(b_{22}-b_{21}), \\
c_7 &= (a_{12}-a_{21})(b_{22}-b_{11}) + (a_{11}-a_{22})(b_{21}-b_{12}).
\end{align*}
A ternary cubic of this form is called a \emph{Spohn cubic}. For reasons that will become clearer later on, we are interested in the irreducible components of the Spohn cubic. According to Theorem~\ref{properties}, for generic payoff matrices, the Spohn variety, and therefore also the Spohn cubic $\mathcal C$, is irreducible. But what conditions must the entries of the payoff tables fulfill in order for $\mathcal C$ to be reducible?

\begin{remark}\label{rem: cubic is zero}
    The cubic equation $f$ is zero and therefore $\mathcal C = \mathbb P^2$ if and only if one of the following holds
    \begin{enumerate}
        \item One of the payoff tables is constant.
        \item $a_{11}=a_{21}$, $a_{12}=a_{22}$, $b_{11}=b_{12}$, $b_{21}=b_{22}$
        \item $a_{11}=a_{12}=a_{22}$, $b_{11}=b_{21}=b_{22}$
        \item $a_{11}=a_{12}=a_{21}$, $b_{11}=b_{12}=b_{21}$
    \end{enumerate}
The second case is exactly when $\det M_1 = \det M_2$.    
\end{remark}
\noindent The following result answers, to some extent, the questions posed in \cite[Problem 4.3]{PortakalWindisch}. 

\begin{theorem}
\label{thm: cases}
The cubic $\mathcal C$ is reducible if and only if $f$ is non-zero and one of the following cases holds: \\

\begin{minipage}{0.3\textwidth}
    \begin{enumerate}[leftmargin = 0.95cm, itemsep = 0.6ex]
        \item $a_{11}=a_{12}$ 
        \item $a_{11}=a_{21}$ 
        \item $a_{21}=a_{22}$ 
    \end{enumerate}
\end{minipage}
\begin{minipage}{0.22\textwidth}
    \begin{enumerate}[leftmargin = 0cm, itemsep = 0.6ex]        
        \setcounter{enumi}{3}        
        \item $b_{11}=b_{12}$ 
        \item $b_{11}=b_{21}$ 
        \item $b_{12}=b_{22}$  
    \end{enumerate}
\end{minipage}
\begin{minipage}[b]{0.3\textwidth}
    \begin{enumerate}[leftmargin = 0cm, itemsep = 0.6ex]
        \setcounter{enumi}{6}
        \item $a_{12}=a_{22}, b_{21}=b_{22}$
        \item $a_{12}=a_{21}, b_{12}=b_{21}$
    \end{enumerate}
\end{minipage} \\
\vspace{0.15cm}
\begin{enumerate}[itemsep = 1.7ex]
\setcounter{enumi}{8}
\item
            $ \hspace{0.14cm} 0= a_{12}(b_{12}-b_{22}) + a_{21}(b_{22}-b_{21}) + a_{22} (b_{21}-b_{12}) $ \vspace{0.1cm} \\
             $\hspace*{0.46cm} = a_{11}(b_{22}-b_{12}) + a_{21}(b_{11}-b_{22}) + a_{22}(b_{12}-b_{11}) $ \vspace{0.1cm}\\
            $ \hspace*{0.46cm} =  a_{11}(b_{22}-b_{21}) + a_{12}(b_{11}-b_{22}) + a_{22}(b_{21}-b_{11}) $
\item 
           \hspace{0.14cm}$ 0 = a_{11}(b_{12}-b_{21}) + a_{12}(b_{21}-b_{22}) + a_{21} (b_{22}-b_{12}) $ \vspace{0.1cm}\\            
           \hspace*{0.46cm}$= a_{12}(b_{11}-b_{21}) + a_{21}(b_{12}-b_{11}) + a_{22}(b_{21}-b_{12}) $ \vspace{0.1cm}\\
           \hspace*{0.46cm}$ =  a_{11}(b_{11}-b_{21}) + a_{21}(b_{22}-b_{11}) + a_{22}(b_{21}-b_{22}) $ 

\item 
           \hspace{0.14cm}$0 = a_{12}(b_{22}-b_{21}) + a_{21}(b_{12}-b_{22}) + a_{22} (b_{21}-b_{12}) $ \vspace{0.1cm}\\ 
           \hspace*{0.46cm}$= a_{11}(b_{22}-b_{21}) + a_{21}(b_{11}-b_{22}) + a_{22}(b_{21}-b_{11}) $ \vspace{0.1cm}\\
           \hspace*{0.46cm}$ =  a_{11}(b_{22}-b_{12}) + a_{12}(b_{11}-b_{22}) + a_{22}(b_{12}-b_{11}) $

\item 
        \hspace{0.14cm}$0= a_{11}(b_{12}-b_{21}) + a_{12}(b_{22}-b_{12}) + a_{21} (b_{21}-b_{22}) $ \vspace{0.1cm}\\ 
           \hspace*{0.46cm}$= a_{12}(b_{11}-b_{12}) + a_{21}(b_{21}-b_{11}) + a_{22}(b_{12}-b_{21}) $ \vspace{0.1cm}\\
           \hspace*{0.46cm}$ =  a_{11}(b_{11}-b_{21}) + a_{12}(b_{22}-b_{12}) + a_{21}(b_{21}-b_{11}) + a_{22}(b_{12}-b_{22})$       
\end{enumerate}

\begin{proof}
The \href{https://mathrepo.mis.mpg.de/elliptic_curves_game_theory/#proof-of-lemma-3-2}{{\color{magenta}detailed computations}} can be found in \cite{mathrepo}.
The ternary cubic $\mathcal C \subset \mathbb P^2$ is reducible if and only if there exists a projective line completely contained in it.
 Take three lines $L_1$, $L_2$, $L_3$ which do not have a common point of intersection and consider their intersections $X_i = L_i \cap \mathcal C$ with the cubic. Clearly, if any of these intersections is the whole line, then $\mathcal C$ is reducible. If this is not the case, then, since any two lines in $\mathbb{P}^2$ intersect, any line contained in $\mathcal C$ passes through a pair of distinct points $(p_1,p_2) \in X_1 \times (X_2 \cup X_3)$. \\

\noindent
Consider the projective lines $L_1= \mathbb V(x)$, $L_2 = \mathbb V(y)$ and $L_3 = \mathbb V(z)$. We have
\begin{align*}
f(0,y,z) = yz(c_5 y + c_6 z), \\
f(x,0,z) = xz(c_2 x + c_4 z), \\ 
f(x,y,0) = xy(c_1 x + c_3 y).
\end{align*}
If any of these is zero, then this means that the entire corresponding line is contained in the cubic. Hence, if $c_5=c_6=0$, $c_2=c_4=0$ or $c_1=c_3=0$, the cubic is reducible and we are done. If not, we look at the points of intersection $X_i = \mathcal C \cap L_i$ of the cubic with these lines, which can be obtained through the zeros of the polynomials above. Namely,

\begin{align*}
X_1 = \{[0:0:1],[0:1:0],[0:1:-\frac{c_5}{c_6}]\}, \\ 
X_2 = \{[0:0:1],[1:0:0],[1:0:-\frac{c_2}{c_4}]\}, \\ 
X_3 = \{[0:1:0],[1:0:0],[1:-\frac{c_1}{c_3}:0]\},
\end{align*}
where the last element of each set is only contained if the corresponding denominator is nonzero.\\

\noindent
We now take pairs of distinct points $(p_1,p_2) \in X_1 \times ((X_2 \setminus X_1) \cup (X_3 \setminus X_1))$.  
By inserting both points into a line equation $ax+by+cz=0$ we obtain the defining equation of the line going through them. For example, the first pair gives us $c=0$ and $a-\frac{c_1}{c_3}b=0$, which results in the given line.
If one of these lines is contained in $\mathcal C$, then, under the given assumptions, $\mathcal C$ is reducible; if not, it must be irreducible.
$$ 
\begin{array}{l|l}
 (p_1,p_2) & \text{Line through } p_1 \text{ and } p_2  \\
\hline
[0:0:1],[1: - \frac{c_1}{c_3} :0] & V( \frac{c_1}{c_3} x + y) \\
  {[0:1:0]},[1:0: - \frac{c_2}{c_4}] & V( \frac{c_2}{c_4} x + z) \\
{[0:1: - \frac{c_5}{c_6} ]},[1:0:0] & V(\frac{c_5}{c_6} y + z) \\
 {[0:1: - \frac{c_5}{c_6} ]},[1:0: - \frac{c_2}{c_4} ] & V( \frac{c_2}{c_4}x+ \frac{c_5}{c_6} y +z )
\\
{[0:1: - \frac{c_5}{c_6} ]},[1: - \frac{c_1}{c_3} :0] & V( \frac{c_1 c_5}{c_3 c_6}x+ \frac{c_5}{c_6} y +z )
\end{array}
$$
Notice that the pairs only actually appear here if all denominators of coordinates are nonzero.\\

\noindent
Inserting these lines into the conic equation and decomposing yields 
\begin{align*}
 f(x, - \frac{c_1}{c_3} x, z) 
    & = \frac{- xz (a_{21}-a_{22})(a_{11}-a_{12})((b_{11}-b_{22})x+(b_{21}-b_{22})z)}{a_{12}-a_{22}}, \\
 f(x,y,- \frac{c_2}{c_4} x) 
    & = \frac{xy (b_{12}-b_{22})(b_{11}-b_{21})((a_{11}-a_{22})x + (a_{12}-a_{22})y )}{b_{21}-b_{22}}, \\
 f(x,y,- \frac{c_5}{c_6} y) 
    & = \frac{xy(d_3 x + e_3 y)}{(a_{12}-a_{21})^2(b_{21}-b_{22})}, \\
 f(x,y,- \frac{c_2}{c_4} x - \frac{c_5}{c_6} y) 
    & = \frac{-xy (d_4 x + e_4 y)}{(a_{12}-a_{21})^2(b_{21}-b_{22})}, \\  
 f(x,y,- \frac{c_1 c_5}{c_3 c_6} x - \frac{c_5}{c_6} y)  
    & = \frac{x ((a_{11}-a_{22})x + (a_{12}-a_{22})y) (d_5 x + e_5 y)}{(a_{12}-a_{21})^2(b_{21}-b_{22})},
\end{align*}
where the $d_l$ and $e_l$ are very long polynomials in the entries of the payoff matrices with integer coefficients. Note that, under the given assumptions, the denominators are nonzero.
The cubic is now reducible if, under the given assumptions, one of the numerators of these polynomials is zero everywhere or, more precisely, if one of the factors in their decompositions above is zero. \\
 
\noindent 
First, let us take a look at the simpler factors.
The first polynomial, $f(x, - \frac{c_1}{c_3} x, z)$, is zero if and only if $a_{21}=a_{22}$, $a_{11}=a_{12}$ or $b_{11}=b_{21}=b_{22}$. We assume here that $c_3 \neq 0$, but actually reducibility follows from these cases even if this is not satisfied: Indeed, if $c_3=0$, then there are two possibilities. If $b_{11}=b_{12}$, then $c_1=c_3=0$.
If $a_{12}=a_{22}$, then 
$a_{12}=a_{21}=a_{22}$ implies $c_5=c_6=0$ and 
$a_{11}=a_{12}=a_{22}$ implies $c_1=c_3=0$. The case that $b_{11}=b_{21}=b_{22}$ implies $c_2=c_4=0$ anyways and can therefore actually be omitted. We are left with the cases (1) and (3) from the classification.\\
\noindent
Very similarly, the second polynomial gives us the cases (5) and (6). The case $a_{11}=a_{12}=a_{22}$ coming from the last factor cannot occur, since we assume $c_3 \neq 0$ here.\\

\noindent
We just used that $\mathcal C$ is  reducible if $c_1=c_3=0$, $c_2=c_4=0$ or $c_5=c_6=0$. Given the simpler cases we just obtained, this now reduces to $b_{11}=b_{12}$, $a_{11}=a_{21}$, $(a_{12}=a_{22} \wedge b_{22}=b_{21})$ or $(a_{12}-a_{21} \wedge b_{21}-b_{12})$, namely the cases (2), (4), (7) and (8). \\

\noindent
For the cases under which the long factors vanish we consider the ideals $J_i = (d_i,e_i)$, since we are interested in the conditions on the payoff matrix entries that guarantee $d_i=e_i=0$. \\

\noindent
Let us decompose for example $J_5$: 
\begin{align*}
J_5 = & \hspace{4.5mm} (b_{21}-b_{22}, b_{12}-b_{22}) 
\cap (b_{12}-b_{22}, a_{11}-a_{21}) 
\cap (b_{12}-b_{21}, b_{11}-b_{21}) \\ &
\cap (b_{12}-b_{21}, a_{12}-a_{21})
\cap (b_{11}-b_{21}, a_{11}-a_{21})
\cap (a_{12}-a_{21}, a_{11}-a_{21}) \\ &
\cap I_{11} \cap I_{12}
\end{align*}
Here, $I_{11}$ and $I_{12}$ are the ideals corresponding to the cases (11) and (12). One can check that the vanishing of all the other components is already covered by the cases (1) to (8). In order to actually obtain (11) and (12) from $I_{11}$ and $I_{12}$ we need to show that the assumption that $c_3 \neq 0$ and $c_6 \neq 0$ is not necessary. Let for instance $c_3=0$. If $b_{11}=b_{12}$, then that is case (4). To see what happens if $a_{12}=a_{22}$ we look at the decomposition of the two ideals
\begin{align*}
 & \hspace{1mm} I_{11} + (a_{12}-a_{22})    \\
= & \hspace{1mm} (b_{12}-b_{22}, a_{12}-a_{22}, a_{21}b_{11} - a_{22}b_{11}-a_{11}b_{21} + a_{22}b_{21} + a_{11}b_{22} - a_{21}b_{22}) \\
&\cap (a_{21}-a_{22},a_{12}-a_{22}, a_{11}-a_{22}),
\end{align*}
\begin{align*}
 & \hspace{1mm} I_{12} + (a_{12}-a_{22})  \\
= & \hspace{1mm} (b_{11}-b_{21},a_{12}-a_{22}, a_{11}b_{12} - a_{22}b_{12}-a_{11}b_{21}+a_{21}b_{21}-a_{21}b_{22}+a_{22}b_{22}) \\
& \hspace{1mm} \cap  (a_{21}-a_{22},a_{12}-a_{22}, a_{11}-a_{22}).
\end{align*}
The second components cannot vanish, since $f$ is nonzero (Remark~\ref{rem: cubic is zero}). The first components vanishing is already covered by the existing cases. Hence, it is not necessary to assume that $c_3 \neq 0$ and, similarly, neither is $c_6 \neq 0$. In this case we also note that $f(x,y,- \frac{c_1 c_5}{c_3 c_6} x - \frac{c_5}{c_6} y)$ also vanishes if $a_{11}=a_{12}=a_{22}$, but this is already covered by case (1). \\

\noindent
For $J_3$ and $J_4$, we can proceed similarly. Decomposing $J_3$ gives us the cases (9) and (10), and decomposing $J_4$ gives us the cases (11) and (12) again. Assumptions are required for none of both.
\end{proof}
\end{theorem}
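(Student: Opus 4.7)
The plan is to exploit the classical fact that a plane cubic is reducible if and only if it contains a projective line, reducing the problem to classifying all payoff data $(a_{ij}, b_{ij})$ for which $\mathcal{C}$ contains a line. The backward implication (each listed condition forces reducibility) is a direct factorization check, so the heart of the proof is the forward direction: enumerate all lines possibly lying on $\mathcal{C}$, then read off the algebraic conditions forcing such a line to exist.

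First I would restrict $f$ to the three coordinate lines $L_1 = V(x)$, $L_2 = V(y)$, $L_3 = V(z)$, obtaining the binary cubics
\begin{align*}
f(0,y,z) &= yz(c_5 y + c_6 z), \\
f(x,0,z) &= xz(c_2 x + c_4 z), \\
f(x,y,0) &= xy(c_1 x + c_3 y).
\end{align*}
If any restriction vanishes identically, the corresponding coordinate line lies in $\mathcal{C}$; expanding the resulting conditions $c_1 = c_3 = 0$, $c_2 = c_4 = 0$, $c_5 = c_6 = 0$ in terms of $a_{ij}, b_{ij}$ produces cases (2), (4), (7), (8). Otherwise, each intersection $X_i := \mathcal{C} \cap L_i$ is a set of three points, easily written down from the factorizations above. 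Since any projective line meets each $L_i$ and the three $L_i$ have no common point, any line $L \subset \mathcal{C}$ must pass through at least one point of $X_1$ and a distinct point of $X_2 \cup X_3$. This bounds the candidate lines to a short explicit list parametrised by the $c_i$.

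Next, for each candidate line I would substitute its parametrisation into $f$, obtaining a binary cubic in the line parameter whose coefficients are polynomials in the $a_{ij}, b_{ij}$; the line sits in $\mathcal{C}$ precisely when these coefficients all vanish. The ``short'' candidates (lines through two of the distinguished points $[1{:}0{:}0], [0{:}1{:}0], [0{:}0{:}1]$) yield linear conditions such as $a_{11}=a_{12}$, producing cases (1), (3), (5), (6). The remaining candidates yield ideals $J_3, J_4, J_5$ generated by the longer polynomials $d_l, e_l$; for each I would compute a primary (or minimal prime) decomposition in Macaulay2 and match the components against the cases (9)--(12).

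The main obstacle is twofold. First, the polynomials $d_l, e_l$ are bulky and the ideal decompositions produce many components; one has to show that, modulo the conditions already identified as (1)--(8), the remaining components give precisely cases (9)--(12). Second, the parametrisations of several candidate lines implicitly assume denominators like $c_3$ or $c_6$ are nonzero, so to be safe one must re-examine what happens when these denominators vanish by saturating or intersecting with $(c_3)$, $(c_6)$ and checking that any surviving component either reproduces an already-known case or forces $f \equiv 0$, which is excluded by Remark~\ref{rem: cubic is zero}. Because of the combinatorial richness and the size of the polynomials, the computation is naturally delegated to computer algebra, with the detailed scripts stored on MathRepo.
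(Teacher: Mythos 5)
Your proposal follows essentially the same route as the paper: restrict $f$ to the coordinate lines to locate the intersection points, enumerate candidate lines through pairs of those points, substitute into $f$, peel off the linear conditions from the short factors, and delegate the decomposition of the ideals from the long factors $(d_l,e_l)$ to computer algebra, carefully handling the vanishing-denominator cases $c_3=0$, $c_6=0$ at the end. This matches the paper's proof step for step.
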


Given the cases from Theorem~\ref{thm: cases} and assuming that the payoff tables are otherwise generic, one can compute the actual \href{https://mathrepo.mis.mpg.de/elliptic_curves_game_theory/#decomposition-of-spohn-cubic}{\textcolor{magenta}{irreducible components of the Spohn cubic}}, as is done in \cite{mathrepo}. We see that here, the Spohn cubic always decomposes into a line and a conic. 

Let us take a closer look at what these cases look like.
For the first 8 cases, it is quite clear that the Spohn cubic is reducible. The longer cases are not as obvious but there is still some pattern to them. In fact, the game Prisoner's Dilemma from Example~\ref{PrisonersDilemma} falls under case (9). Notice that the equalities in the cases are invariant under operations on the payoff tables that preserve the Spohn curve (see also Example \ref{Examplejinv}).

\begin{remark}\label{rem: combinatorialstructure}
Denote by $I_9$, $I_{10}$, $I_{11}$, $I_{12}$ the ideals spanned by the (quadric) polynomials in the cases (9) to (12) of Theorem~\ref{thm: cases}. 
First we notice \href{https://mathrepo.mis.mpg.de/elliptic_curves_game_theory/#remark-3-3}{\textcolor{magenta}{computationally}} (\cite{mathrepo}), that 
\begin{equation}\label{eq: combinatorial quadrics}
\begin{split}
I_9 &=  I_9 + (a_{11}(b_{12}-b_{21}) + a_{12}(b_{11}-b_{12}) + a_{21}(b_{21}-b_{11})) \\
I_{10} &= I_{10} + (a_{11}(b_{11}-b_{12})+ a_{12}(b_{22}-b_{11}) + a_{22}(b_{12}-b_{22})) \\
I_{11} &= I_{11} + (a_{11}(b_{12}-b_{21}) + a_{12}(b_{21}-b_{11}) + a_{21}(b_{11}-b_{12}))\\
I_{12} &=  I_{12} + (a_{11}(b_{11}-b_{21}) + a_{12} (b_{22}-b_{11}) + a_{22} (b_{21}-b_{22}), \\
& \hspace{1.75cm} a_{11}(b_{11}-b_{12}) + a_{21}(b_{22}-b_{11}) + a_{22}(b_{12}-b_{22})).
\end{split}
\end{equation}
The original generators of the ideals $I_9,I_{10},I_{11}$ (i.e., the polynomials in Theorem~\ref{thm: cases}) are interchangeable with these new polynomials that have been added in (\ref{eq: combinatorial quadrics}): any choice of three of the four generators (i.e.\ of the three original generators and the generator added in (\ref{eq: combinatorial quadrics})) is sufficient to generate the entire ideal $I_m$, for $m=9,10,11$. 
For case (12), the third generator of $I_{12}$ does not fit in with the others. Here, any choice of three of the first two original generators of $I_{12}$ and the two additional generators in (\ref{eq: combinatorial quadrics}) is sufficient to generate the entire ideal. We can therefore forget about the long term in (12) and replace it with the shorter ones that fit in with the other cases.

\noindent
For each of the generating four polynomials of $I_m$, there is exactly one $a_{ij}$ that does not occur in the polynomial. These $a_{ij}$ are pairwise different within the four generators. Denote by $f^{(m)}_{ij}$ the polynomial among the generators of $I_m$ that does not contain $a_{ij}$. 
These 4-element sets of polynomials can be represented in the payoff tables 
 $A$ and $B$ as follows: \\[0.2cm]

\begin{figure}[h!]
    \centering
    \input{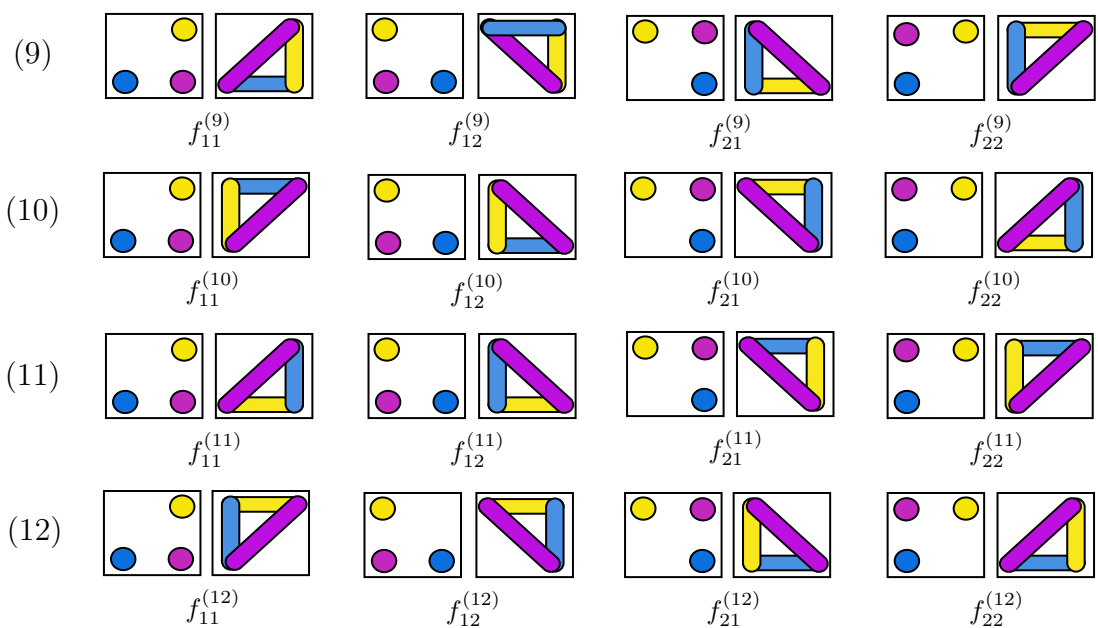}
    \caption{\small For a polynomial $f^{(m)}_{lk}$, the dots on the left side in $A$ represent the variables $a_{ij}$ appearing in $f^{(m)}_{lk}$. The lines on the right side of the same color in $B$ represent the two variables $b_{ij}$ that occur in the same monomial as $a_{ij}$ within $f^{(m)}_{lk}$. For example, the yellow line in $B$ for $f^{(9)}_{11}$ represents the monomials $a_{12}b_{12}$ and $a_{12}b_{22}$ in $f^{(9)}_{11}$.}\label{fig: combinatorial description}
   
\end{figure}

\vspace{0.3cm}

\noindent There is only one way (up to sign) to arrange monomials in $f_{lk}^{(m)}$ such that each variable occurs exactly in two monomials and one monomial that it occurs in has a negative sign and one has a positive sign. Therefore, Figure~\ref{fig: combinatorial description} combinatorially determines the polynomials $f^{(m)}_{lk}$. 

\noindent
The triangles in $B$ either agree in their placement with the triangles in $A$, or they are mirrored on the hypotenuse of $A$. For a fixed $m = 9,10,11,12$, either both triangles in $A$ with the hypotenuse on the diagonal are mirrored on it in $B$, or they both remain the same in $B$. The same holds for the anti-diagonal. We also notice that, for the (purple) point $a_{ij}$ in the triangle in $A$ that is not on the hypotenuse, the corresponding line in $B$ is always the hypotenuse of $B$. For the other (blue and yellow) points $a_{ij}$ in the triangle, either the corresponding edge in $B$ goes through the point $b_{ij}$ in the same position or through the point that lies opposite of $b_{ij}$ in $B$. Within each case (m), for both diagrams in which the triangles have an edge on the diagonal, exactly one of the two choices explained above holds for all blue and yellow points. The same holds for the anti-diagonal. 
\end{remark}

\subsection{Finding real smooth points}

As hinted, knowing about the irreducible components of a variety is useful in determining the denseness of its real part.
A pertinent comment to explicitly state here is that not all quadrics defined over $\mathbb R$ have real points, and not all quadrics with dense real points have a common smooth real point. This is different from the case of irreducible plane cubics with a distinguished point for which a smooth real point always exists.
\begin{theorem}\cite[Theorem 2.2.9]{Mangolte}
Let $V$ be an irreducible complex affine variety defined by real polynomials. If $V$ has a smooth point with real coordinates, then $V(\mathbb R)$, the set of real points of $V$, lies Zariski dense in $V$.
\end{theorem}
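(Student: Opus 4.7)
The plan is to argue by contradiction, using the classical fact that a holomorphic function on a polydisk that vanishes on the real slice must vanish identically. First I would assume, for contradiction, that $V(\mathbb{R})$ is not Zariski dense in $V$: its Zariski closure $W$ is then a proper closed subvariety of $V$, and by irreducibility of $V$ there exists a polynomial $f \in \mathbb{R}[x_1,\ldots,x_N]$ vanishing on $W$ (hence on $V(\mathbb{R})$) but not identically on $V$. The aim is to derive a contradiction from the existence of such an $f$.

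Next, I would build a well-aligned local chart at the given smooth real point $p \in V(\mathbb{R})$. Setting $d = \dim_{\mathbb{C}} V$, smoothness of $V$ at $p$ means that the Jacobian at $p$ of some set of $N-d$ real polynomials generating the ideal of $V$ near $p$ has full rank $N-d$ over $\mathbb{R}$, and therefore also over $\mathbb{C}$. The holomorphic implicit function theorem then realises $V$ locally as a graph: one obtains a biholomorphism $\varphi \colon U \to V(\mathbb{C}) \cap \Omega$ from a polydisk $U \subset \mathbb{C}^d$ centered at $0$ onto a Euclidean neighborhood $\Omega$ of $p$ in $V(\mathbb{C})$, whose components are convergent power series with real coefficients. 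By realness of the coefficients, $\varphi$ restricts to a real-analytic homeomorphism $U \cap \mathbb{R}^d \to V(\mathbb{R}) \cap \Omega$.

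The final step is analytic continuation. The pullback $g := f \circ \varphi$ is a holomorphic function on the polydisk $U$ that vanishes identically on the real slice $U \cap \mathbb{R}^d$; expanding $g$ as a convergent power series about $0$, this forces every Taylor coefficient to vanish, so $g \equiv 0$ on $U$. Hence $f$ vanishes on the Euclidean open subset $\varphi(U) \subseteq V^{\mathrm{sm}}$. Since the smooth locus of an irreducible complex variety is a connected complex manifold, the identity theorem for holomorphic functions forces $f$ to vanish on all of $V^{\mathrm{sm}}$, and therefore on its Zariski closure $V$, contradicting the choice of $f$.

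The main technical ingredient is the construction of the chart $\varphi$: its real slice must match $V(\mathbb{R})$ near $p$ precisely, which relies crucially on the defining polynomials being real so that the power series produced by the implicit function theorem have real coefficients. Once this is in place, the conclusion follows immediately from the identity theorem, so there is no deeper obstacle beyond being careful with this real-complex matching.
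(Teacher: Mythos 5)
Your proof is correct; note that the paper itself does not prove this statement but simply cites it from Mangolte's book, so there is no in-paper proof to compare against. Your argument is the standard one: use the holomorphic implicit function theorem (with real defining polynomials to get real Taylor coefficients, so that the real slice of the chart parametrizes $V(\mathbb{R})$ near $p$), propagate the vanishing of $f\circ\varphi$ from the real slice to the whole polydisk via Taylor coefficients, and then invoke the identity theorem on the connected complex manifold $V^{\mathrm{sm}}$. Two small points worth making explicit if you write this up: one should observe that $I(W)$ is stable under complex conjugation (so a real $f\in I(W)\setminus I(V)$ exists), and that $\varphi(U)\subset V^{\mathrm{sm}}$ after possibly shrinking the polydisk, since smoothness is an open condition in the Euclidean topology.
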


By covering the ambient projective space with dense open affine subsets, we may apply this to the Spohn cubic, as well as the Spohn variety.

\begin{lemma}\label{lem: denseness for plane cubic}   
Given that the defining equation $f$ is non-zero,  every irreducible component of the Spohn cubic $\mathcal C$
contains a smooth point with real coordinates.
 In particular, the set of real points $\mathcal C (\mathbb R)$ lies Zariski dense in $\mathcal C$.
\begin{proof}
If $f$ is zero, the real points trivially lie dense in $\mathcal C$. Hence assume that $f \neq 0$. If $f$ is irreducible, then the cubic is an elliptic curve and has real points.  
If the cubic $\mathcal C$ is reducible, it is either the union of a line and a conic or the union of three lines. All lines in $\mathbb P^2$ are smooth and contain real points. While all irreducible conics are projectively equivalent to $x^2 +yz$ and therefore smooth, it can happen that they contain no real smooth points. Hence, it suffices to take a closer look at those conics that we obtain from the cases in Theorem~\ref{thm: cases}, as shown in \cite{mathrepo}.\footnote{These 12 cases are presented in the section titled \emph{Decompostion of the Spohn Cubic} in \cite{mathrepo}.}
We may assume that the defining polynomials are always irreducible, as otherwise we obtain a union of two (possibly identical) lines. \\
(1) \& (3) The conic in this case is of the form 
$g = d_1 xy + d_2 xz + d_3 yz + d_4 z^2$
and its Jacobian at a point $p$ is given by
$$ J(p) =
\begin{pmatrix}
d_1 y + d_2 z &
d_1 x + d_3 z &
d_2 x + d_3 y + 2 d_4 z
\end{pmatrix}.
$$
Clearly, if $d_1 = 0$, then $g$ is reducible. Hence, we can assume that $d_1 \neq 0$, under which circumstance the point $[0:1:0]$ is a smooth point. \\
(2) The conic in this case is of the form
\begin{align*}
g &= (a_{21}-a_{22})(b_{11}-b_{12})x^2 + (a_{12}-a_{22})(b_{11}-b_{12})xy \\
&+ ((a_{21}-a_{12})(b_{11}-b_{22}) 
+(a_{21}-a_{22})(b_{21}-b_{12}))xz \\
&+ (a_{22}-a_{12})(b_{12}-b_{21})yz 
+ (a_{21}-a_{12})(b_{21}-b_{22})z^2 \\
&= d_1 x^2 + d_2 xy + d_3 xz + d_4 yz + d_5 z^2
\end{align*}
and its Jacobian at a point $p$ is given by
$$ J(p) =
\begin{pmatrix}
2d_1 x + d_2 y + d_3 z &
d_2 x + d_4 z &
d_3 x + d_4 y + 2 d_5 z
\end{pmatrix}.
$$
Assume for contradiction that $d_2=d_4=0$. This implies that $a_{12}=a_{22}$, in which case $g = (a_{21}-a_{12})(x+z)((b_{11}-b_{12})x+(b_{21}-b_{22})z)$, or that $b_{11}=b_{12}=b_{21}$, in which case $d_1=0$ and $g$ is divisible by $z$. Therefore, either $d_2$ or $d_4$ is non-zero and hence $[0:1:0]$ is a smooth point. \\
The cases (4),(5) and (6) follow in the same way as the previous three. \\
(7), (11) \& (12) The conic in this case is of the form
$g = d_1 xy + d_2 xz + d_3 yz$
and its Jacobian at a point $p$ is given by 
$$J(p) = 
\begin{pmatrix}
d_1 y + d_2 z &
d_1 x + d_3 z &
d_2 x + d_3 y 
\end{pmatrix}.
$$ 
Since $g$ is irreducible, $d_1$ must be non-zero and thus $[1:0:0]$ is a smooth point. \\
(8) The conic in this case is of the form
$ g = d_1 xy + d_2 y^2 + d_3 xz + d_4 z^2$
and its Jacobian at a point $p$ is given by
$$ J(p) =
\begin{pmatrix}
d_1 y + d_3 z &
d_1 x + 2 d_2 y &
d_3 x + 2 d_4 z 
\end{pmatrix}.
$$
If $d_1=d_3 = 0$, then $g$ is reducible over $\mathbb C$, hence we may assume that one of them is non-zero, under which circumstance the point $[1:0:0]$ is smooth. \\
(9) \& (10) The conic in this case is of the form
$g = d_1 x^2 + d_2 xy + d_3 xz + d_4 yz$
and its Jacobian at a point $p$ is given by 
$$J(p) = 
\begin{pmatrix}
2 d_1 x + d_2 y + d_3 z &
d_2 x + d_4 z &
d_3 x + d_4 y 
\end{pmatrix}.
$$ 
Since $g$ is irreducible, $d_4$ must be non-zero and thus $[0:1:0]$ is a smooth point. 
\end{proof}
\end{lemma}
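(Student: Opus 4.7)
The plan is to separate the proof into the irreducible and reducible situations, and in the reducible situation to walk through the 12 cases of Theorem~\ref{thm: cases} one at a time, for each one writing down the explicit conic/line factors and exhibiting a visible real smooth point. The final Zariski-density conclusion is then automatic from the cited Theorem~2.2.9 of \cite{Mangolte} applied to each irreducible component, after covering $\mathbb P^2$ by the standard affine charts. Throughout I would freely use the reduced decompositions of the Spohn cubic computed on the MathRepo page, which express $\mathcal C$ as a union of a line and a conic in every reducible case.

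For the easy cases: if $f$ is non-zero and irreducible, then by Theorem~\ref{parametrization} (and the fact that the Spohn cubic is the image of the Spohn variety under the planar reduction) it is a smooth plane cubic, so \emph{every} complex point is smooth and it suffices to produce a real point, which exists because $f$ is a real cubic form (an odd-degree real form in three variables always has a real zero, e.g.\ by restricting to a generic real line). Any line component of a reducible decomposition is $\mathbb P^1$ and trivially has a dense set of real smooth points. This leaves the conic components arising from cases (1)--(12), which is where the real work sits.

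For each case I would substitute the defining equalities from Theorem~\ref{thm: cases} into the formula (\ref{eq: Spohn cubic}) for $f$, factor off the obvious linear factor (guaranteed by the proof of Theorem~\ref{thm: cases}), and obtain the residual conic $g$. I would then group the 12 cases by the \emph{shape} of $g$: cases (1) and (3) give a conic of the form $d_1xy+d_2xz+d_3yz+d_4z^2$; cases (4), (5), (6) are symmetric to (1), (3), (2) by swapping the roles of the two players; cases (7), (11), (12) give the triangle-type $d_1xy+d_2xz+d_3yz$; case (8) gives $d_1xy+d_2y^2+d_3xz+d_4z^2$; cases (9) and (10) give $d_1x^2+d_2xy+d_3xz+d_4yz$; case (2) is a slightly larger shape with five monomials. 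In every case I would evaluate the Jacobian row at a coordinate point such as $[1:0:0]$ or $[0:1:0]$ and show that at least one coordinate of the gradient is nonzero.

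The main obstacle is the nondegeneracy bookkeeping: the candidate coordinate point is smooth only when certain coefficients $d_i$ of $g$ are nonzero, and if they all vanish the conic itself becomes reducible (a product of two lines). So in each case I would have to argue that under the standing hypotheses of that case and the assumption that $g$ is irreducible, the required coefficient is nonzero; and if $g$ is \emph{not} irreducible, then the decomposition reduces to two (possibly coincident) lines, which are automatically covered. Concretely this means, for each case, tracing back the vanishing of the would-be-nonzero $d_i$ to equalities among the $a_{ij}$ and $b_{ij}$, and checking that those equalities either contradict $f\neq 0$ (via Remark~\ref{rem: cubic is zero}) or force $g$ to split, in which case we are back to the line situation. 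Once this is done for all 12 cases, every irreducible component of $\mathcal C$ contains a real smooth point, and applying \cite[Theorem~2.2.9]{Mangolte} componentwise yields the Zariski density of $\mathcal C(\mathbb R)$ in $\mathcal C$.
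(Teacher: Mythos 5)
Your proposal follows essentially the same route as the paper: treat the irreducible cubic and line factors as trivial, then run through the twelve reducibility cases of Theorem~\ref{thm: cases}, group the residual conics by monomial shape, and check smoothness at a coordinate point via the Jacobian, arguing that the needed coefficient cannot vanish without forcing the conic to split into lines. One small slip: an irreducible plane cubic over $\mathbb R$ need not be smooth (Theorem~\ref{parametrization} only covers generic games), so not \emph{every} point is smooth --- but since an irreducible cubic has only finitely many singularities while a real ternary cubic form has infinitely many real zeros, a real smooth point still exists, so the conclusion is unaffected.
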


Our aim is to extend these properties to the Spohn variety $\V \subseteq \mathbb P^3$. 
For generic games, this has already been done in \cite[Proposition 4.1]{PortakalWindisch}, but we are interested in the non-generic cases from Theorem~\ref{thm: cases}. 
We start by decomposing $\V$ into its \href{https://mathrepo.mis.mpg.de/elliptic_curves_game_theory/#decomposition-of-the-spohn-variety}{\textcolor{magenta}{irreducible components}} in the twelve cases and under the assumption that all the other entries are generic. This can be found in \cite{mathrepo}.

\begin{proposition}\cite[Proposition 5.8]{Baldi}
\label{smoothrealpoint}
Let $V \subset \mathbb{C}^m$ be an irreducible variety defined over $\mathbb R$ of dimension $d$ and $\pi: \mathbb{C}^m \to \mathbb{C}^{d+1}$ be a generic projection defined over $\mathbb{R}$. If $\overline{\pi(V)}$ contains a real smooth point, then so does $V$.
\end{proposition}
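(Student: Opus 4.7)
The plan is to exploit the fact that for a generic projection $\pi \colon \mathbb{C}^m \to \mathbb{C}^{d+1}$ from an irreducible $d$-dimensional variety $V$, the restriction $\pi|_V \colon V \to \overline{\pi(V)}$ is birational onto its image, and moreover that this birational equivalence is defined over $\mathbb{R}$ when $V$ and $\pi$ are. From there, the strategy is to transport a real smooth point from $\overline{\pi(V)}$ back to $V$ via the rational inverse on a Zariski open subset where the isomorphism holds.

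First I would invoke the classical fact that a generic linear projection from an irreducible $d$-dimensional variety in $\mathbb{C}^m$ to $\mathbb{C}^{d+1}$ is birational onto its image, which is then an irreducible hypersurface. Since $V$ and $\pi$ are defined over $\mathbb{R}$, a generic real projection works, and consequently there exist nonempty Zariski open subsets $U_V \subset V$ and $U_W \subset \overline{\pi(V)}$, both defined over $\mathbb{R}$, such that $\pi|_{U_V} \colon U_V \to U_W$ is an isomorphism of $\mathbb{R}$-varieties with inverse $\sigma \colon U_W \to U_V$ also defined over $\mathbb{R}$.

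Next, assume $\overline{\pi(V)}$ contains a real smooth point $q_0$. By the cited theorem of Mangolte (Theorem 2.2.9), the set of real points $\overline{\pi(V)}(\mathbb{R})$ is then Zariski dense in $\overline{\pi(V)}$. Since $U_W \cap (\overline{\pi(V)})_{\mathrm{sm}}$ is a nonempty Zariski open subset of $\overline{\pi(V)}$, it must contain a real point $q \in U_W(\mathbb{R})$ that is smooth on $\overline{\pi(V)}$. Applying $\sigma$, the point $p := \sigma(q) \in U_V$ has real coordinates because $\sigma$ is defined over $\mathbb{R}$; and because $\sigma$ is an isomorphism of open subsets carrying a smooth point of $\overline{\pi(V)}$ to a point of $V$, the point $p$ is smooth on $U_V$, hence on $V$.

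The main conceptual obstacle is ensuring that genericity of the projection yields birationality \emph{over $\mathbb{R}$}, which is what makes $\sigma$ rational with real coefficients and thus sends real points to real points; once this is granted, the rest is a routine chase through a diagram of Zariski open sets. One should also be careful to intersect with the smooth locus of $\overline{\pi(V)}$ when extracting $q$ from $U_W$, which is legitimate since the smooth locus is open and nonempty by hypothesis, and density of real points then guarantees a real representative.
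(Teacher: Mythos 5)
The paper does not give a proof of this statement; it simply cites \cite[Proposition~5.8]{Baldi}, so there is no in-paper argument to compare against. Your proof is correct and uses the standard approach one would expect behind such a citation: a generic linear projection of the irreducible $d$-dimensional variety $V$ onto $\mathbb{C}^{d+1}$ is birational onto its (irreducible) image; since $V$ and $\pi$ are defined over $\mathbb{R}$, the birational inverse and the open loci on which it is an isomorphism are also defined over $\mathbb{R}$; Mangolte's theorem (already quoted in the paper) gives Zariski density of real points on $\overline{\pi(V)}$, so one can pick a real smooth point inside the $\mathbb{R}$-defined open set where the inverse is regular, and transport it to a real smooth point of $V$. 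Two small points you should make explicit: (i) $\overline{\pi(V)}$ is irreducible (it is the closure of the image of the irreducible $V$ under a morphism), which is what makes the intersection $U_W \cap (\overline{\pi(V)})_{\mathrm{sm}}$ nonempty and lets Mangolte's theorem apply; and (ii) the fact that the locus where the rational inverse $\sigma$ is regular, and the graph giving the isomorphism $U_V \cong U_W$, are $\mathrm{Gal}(\mathbb{C}/\mathbb{R})$-stable, hence $\mathbb{R}$-defined, is the precise reason $\sigma$ sends real points to real points. With those clarifications the argument is complete.
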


\begin{theorem}\label{smooth real point in V}
For each of the cases from Theorem~\ref{thm: cases}, assuming that all the other entries of the payoff tables are generic, the irreducible components of the Spohn variety $\V$ each contain a smooth point with real coordinates, hence the set of real points of $\V$ lies Zariski dense in $\V$.
\begin{proof}

Consider an irreducible component $V = \mathbb V_{\mathbb{P}^3}(P)$ of $\V$, where $P$ is prime. If $W:= \mathbb V_{\mathbb P^2}(P \cap \mathbb C[p_{11},p_{12},p_{21}])$ is an irreducible component of $\mathcal C$, then, under our assumptions, it contains a smooth real point $p$. By covering $\mathbb P^2$ with the affine open spaces $\widetilde{U}_{p_{11}=1}$, $\widetilde{U}_{p_{12}=1}$ and $\widetilde{U}_{p_{21}=1}$, for illustration we assume here that $p \in \widetilde{U}_{p_{11}=1}$. 
Let $U_{p_{11}=1}$ be the corresponding affine open space of $\mathbb{P}^3$ and $\pi: \mathbb C^3 \to \mathbb C^2$ the generic projection between affine spaces. Then
$$ V \cap U_{p_{11}=1} \simeq \mathbb V_{\mathbb C^3}(g(1,\cdot) \mid g \in P )$$
and
$$\overline{\pi(\mathbb V_{\mathbb C^3}(g(1,\cdot) \mid g \in P ))} = \mathbb V_{\mathbb{C}^2} (g(1,\cdot) \mid g \in P \cap \mathbb C[p_{11},p_{12},p_{21}]) \simeq W \cap \widetilde{U}_{p_{11}=1}.
$$
If now $\dim V = 1$, then by Proposition~\ref{smoothrealpoint}, the preimage $V \cap U_{p_{11}=1}$ also contains a smooth real point. \\
Considering the specific cases, we start by looking at the \href{https://mathrepo.mis.mpg.de/elliptic_curves_game_theory/#proof-of-theorem-3-7}{\textcolor{magenta}{decompositions of $\V$}} (\cite{mathrepo}). One can check in all cases that every irreducible component has codimension 2. For all cases except (7), one can also check that the elimination ideals of the minimal primes are exactly the minimal primes of $\mathcal C$ for that specific case. By the above reasoning, this implies the existence of a real smooth point in the components of $\V$. In case (7), the components $\mathbb V(p_{21},p_{11})$ and $\mathbb V(p_{12},p_{11})$ remain the same after eliminating $p_{22}$. They are proper subsets of the component $\mathbb V(p_{11})$ of $\mathcal C$. However, since they are lines, they must contain a smooth real point.
Elimination from the third component gives the remaining component of $\mathcal C$ in that case, so as before, this gives us a real smooth point in said component of $\V$.
\end{proof}
\end{theorem}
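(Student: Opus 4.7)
The plan is to reduce everything to the planar picture already handled in Lemma~\ref{lem: denseness for plane cubic}, and then lift smooth real points from $\mathcal{C}\subset\mathbb{P}^2$ back to $\mathcal{V}_X\subset\mathbb{P}^3$ by means of Baldi's projection lemma (Proposition~\ref{smoothrealpoint}). The geometric content is that passing from the two defining quadrics of $\mathcal{V}_X$ to the Spohn cubic corresponds, up to an affine chart, to eliminating $p_{22}$, i.e.\ to the projection with center $[0:0:0:1]$; if this projection realises an irreducible component of $\mathcal{V}_X$ as birational to a component of $\mathcal{C}$, then a smooth real point downstairs pulls back to one upstairs.

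First I would, for each of the twelve cases of Theorem~\ref{thm: cases}, load the explicit minimal primary decomposition of $\mathcal{V}_X$ computed in \cite{mathrepo} and check the following two items: (a) every minimal prime has codimension $2$ in $\mathbb{P}^3$, so each irreducible component is a curve, and (b) its elimination ideal with respect to $p_{22}$ agrees with the defining ideal of one of the components of the Spohn cubic $\mathcal{C}$. These are purely Macaulay2 verifications and should go through uniformly in eleven of the twelve cases.

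Next, given an irreducible component $V=\mathbb{V}(P)$ and a smooth real point $p\in W:=\mathbb{V}(P\cap \mathbb{C}[p_{11},p_{12},p_{21}])$ (supplied by Lemma~\ref{lem: denseness for plane cubic}), I would pick an affine chart $U_{p_{ij}=1}$ of $\mathbb{P}^3$ with $(i,j)\in\{(1,1),(1,2),(2,1)\}$ whose associated chart on $\mathbb{P}^2$ contains $p$. Setting $p_{ij}=1$ identifies $V\cap U$ with an affine curve in $\mathbb{C}^3$ whose coordinate projection to $\mathbb{C}^2$ (dropping $p_{22}$) has closure equal to $W\cap \widetilde{U}$. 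Since this affine projection agrees, on each component, with the generic projection hypothesised in Proposition~\ref{smoothrealpoint} up to a real-linear change of coordinates preserving the real structure, the proposition applies and yields a smooth real point of $V\cap U$. Combining with Mangolte's theorem and the usual open covering of projective space by affine charts then gives Zariski density of $\mathcal{V}_X(\mathbb{R})$ in $\mathcal{V}_X$.

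Finally, I would isolate the outlying case (7) where the projection fails to be dominant on some components. Here the decomposition of $\mathcal{V}_X$ contains the components $\mathbb{V}(p_{11},p_{21})$ and $\mathbb{V}(p_{11},p_{12})$, which are projective lines and hence automatically smooth and full of real points; the third component does project onto the conic component of $\mathcal{C}$ and is treated as above. The main obstacle is not conceptual but bookkeeping: one must be careful that the chosen coordinate projection really is \emph{generic enough} for Proposition~\ref{smoothrealpoint} — equivalently, that the $p_{22}$-projection is generically finite (in fact birational) on each component of $\mathcal{V}_X$ — which is precisely what the codimension check and elimination-ideal comparison in the first step confirm.
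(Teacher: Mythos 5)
Your proposal matches the paper's proof essentially step for step: lift smooth real points on components of $\mathcal{C}$ (from Lemma~\ref{lem: denseness for plane cubic}) through the $p_{22}$-elimination using Proposition~\ref{smoothrealpoint}, verify codimension $2$ and the matching of elimination ideals computationally in all cases, and handle case (7) separately via the two line components. Your closing observation that one must confirm the coordinate projection is ``generic enough'' for Proposition~\ref{smoothrealpoint} — i.e.\ dominant with finite fibers on each component, which is what the elimination-ideal match certifies — is a helpful explicit flag of a subtlety the paper leaves implicit, but it is the same argument.
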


\noindent If the cubic $\mathcal C \subseteq \mathbb P^2$ is reducible, then so is the Spohn variety $\V \subseteq  \mathbb P^3$. There are, however, cases in which $\V$ is reducible and $\mathcal C$ is not. 

\begin{example}\label{counterexample}
Assume that $a_{12}=a_{22}$ and that all other entries are generic. This is not among the cases from Theorem~\ref{thm: cases}, hence, $\mathcal C = \mathbb V(f)$ is irreducible. We can, however, \href{https://mathrepo.mis.mpg.de/elliptic_curves_game_theory/#example-3-8}{\textcolor{magenta}{compute}} that 
\begin{align*}
\mathbb I(\V) = (p_{21},p_{11}) \cap (\det M_1, \det M_2, 
((a_{21}-a_{22})b_{11}+ (a_{22}-a_{21})b_{21})p_{12}^2 \\
+ (a_{21}-a_{11})b_{12}+(a_{11}-a_{21})b_{21})p_{12}p_{21} \\
+ ((a_{21}-a_{22})b_{11} + (a_{22}-a_{11})b_{12} + (a_{11}-a_{22})b_{21} + (a_{22}-a_{21})b_{22}) p_{12}p_{22} \\
+ ((a_{11}-a_{21})b_{21} + (a_{21}-a_{11})b_{22})p_{21}p_{22} \\
+ ((a_{11}-a_{22})b_{21} + (a_{22}-a_{11})b_{22})p_{22}^2)
\end{align*}
Eliminating $p_{22}$ from these gives the ideals $(p_{21},p_{11})$ and $(f)$. It is $V := \mathbb V(p_{21},p_{11}) \subsetneq \mathbb V(f) = \mathcal C$. This corresponds to $\mathcal C$ being irreducible and also, one can observe that we cannot simply obtain a real smooth point in $V$ by pulling back a real smooth point in the planar model. Since $V$ is a line, it contains a real smooth point anyways; therefore, the real points also lie dense in $\mathcal C$ in this case.
\end{example}

\noindent Similarly, $\V$ is reducible if $b_{21}=b_{22}$. There are no other known cases where $\V$ is reducible but $\mathcal C$ is not. \\

Under the cases of Theorem~\ref{thm: cases}, the cubic $\mathcal C$ is reducible if and only if the defining equation $f$ is non-zero. Similarly, the conditions for $\V$ being reducible are also not actually closed. For example, if $a_{11}=a_{21} \neq a_{12}=a_{22}$ and  $b_{11}=b_{12} \neq b_{21}=b_{22}$, then $\V = V(p_{11}p_{22} + p_{12}p_{21})$ is irreducible. Notice that this also results from $f$ being zero in this case. \\

Tracing back from the reducibility and the denseness of real points of the Spohn curve to what this means in practice, it is natural to ask if in the cases when the Spohn curve is reducible, the probabilities that lie on it are actually dependency equilibria.

\begin{remark}
Following \cite{PortakalWindisch}, denote 
$$\mathcal W := \mathbb V((p_{11}+p_{12})(p_{21}+p_{22})(p_{11}+p_{21})(p_{12}+p_{22})).$$ Then, any point $p \in \overline{\Delta} \cap \overline{(\V \setminus \mathcal W)}^{\text{Zar}}$ is a dependency equilibrium. For any point contained in the union of hyperplanes $\mathcal W$, that is not necessarily the case. 
\cite[Theorem 4.9]{PortakalWindisch} 
proves that, 
assuming all other entries of the payoff tables are generic,
in the cases (1) to (7) from Theorem~\ref{thm: cases}, it is $\overline{\Delta} \cap \overline{(\V \setminus \mathcal W)}^{\text{Zar}} \subsetneq \overline{\Delta} \cap \V$, which means we cannot make general statements about dependency equilibria from the Spohn variety in these cases. It also shows that in the cases (8) to (12), equality holds and that, therefore, any point $p \in \overline{\Delta} \cap \V$ is a dependency equilibrium. 
Indeed, one can see in the \href{https://mathrepo.mis.mpg.de/elliptic_curves_game_theory/#remark-3-9}{\textcolor{magenta}{decomposition of $\V$}} in \cite{mathrepo} that, in the cases (1) to (7), some components of $\V$ are contained in $\mathcal W$ and that, in the cases (8) to (12), no component of $\V$ is entirely contained in $\mathcal W$. Furthermore, in these latter cases, \cite[Corollary 3.20]{PortakalWindisch} also guarantees that every Nash equilibrium of the game is also a dependency equilibrium.
\end{remark}


\section{Game theorist's guide to elliptic curve invariants}
\label{sec:equivalentgames}
We have studied in detail when the Spohn curves of $2 \times 2$ games are reducible and therefore not smooth. We will now take a closer look at Spohn curves from generic $2 \times 2$ games, for which we already know that real points lie dense in them (\cite[Proposition 4.1]{PortakalWindisch}) and that the points in $\overline{\Delta} \cap \V$ are dependency equilibria (\cite[Corollary 3.20]{PortakalWindisch}). Also, by Theorem~\ref{parametrization}, these Spohn varieties are elliptic curves and we may use properties of elliptic curves to make statements about dependency equilibria of generic $2\times 2$ games.

It is natural to ask when two generic games are equivalent with respect to their Spohn curves. {This can be addressed using invariants of elliptic curves i.e., by computing the $j$-invariant \eqref{eq:jinvt} and then determining if they are isomorphic over the field of definition}. 
We review the calculation of the $j$-invariant of an elliptic curve given by the intersection of two quadrics in $\mathbb P^3$ in the general case where the quadrics are defined over $\mathbb Q$, and we provide examples including cases where elliptic curves arise as Spohn varieties.
\subsection{Review of invariants of elliptic curves}
    {In this section, we provide the reader with necessary background to compute the $j$-invariant of elliptic curves. \emph{The aim of this section is purely to serve as a primer to readers whose background does not include arithmetic of elliptic curves}.  We restrict to elliptic curves over $\mathbb Q$ since in many known examples of games, the payoffs are rational. Should this not be the case, payoffs can be approximated by rational numbers (for example, via the method of continued fractions \cite[§ 1.8]{das}). See Appendix~\ref{app:contfrac} for more details.}
\begin{definition}[Elliptic curve over $K$]
    An elliptic curve over a field $K$, (char($K) \neq 2$) is a smooth (projective) curve of genus $1$ with at least one $K$-rational point.
\end{definition}
As mentioned above, we restrict ourselves to the case of $K = \mathbb Q$. Elliptic curves over $\mathbb Q$ can be expressed in the form of a cubic equation known as the \emph{Weierstrass form}. 
\begin{theorem}[\cite{Silverman}, Chapter III.2.3]
Every elliptic curve over $\mathbb Q$ can be expressed as a cubic in $\mathbb P^2$ with the following form: 
\begin{equation}\label{eq:cubicp2}
    E_\mathbb{Q} : = y^2z + a_1 xyz + a_3 yz^2 = x^3 + a_2 x^2z + a_4 xz^2 + a_6z^3, \ \ a_{1,2,3,4,6} \in \mathbb Q 
\end{equation}
which upon dehomogenization gives the more recognizable long Weierstrass form of the curve: 
\begin{equation*}
    E_\mathbb{Q} : = y^2 + a_1 xy + a_3 y = x^3 + a_2 x^2 + a_4 x + a_6, \ \ a_{1,2,3,4,6} \in \mathbb Q ~.
\end{equation*}
\end{theorem}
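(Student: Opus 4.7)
The plan is to invoke the Riemann--Roch theorem applied to multiples of the marked $\mathbb{Q}$-rational point $O$. Since $E$ has genus $g=1$ and the canonical divisor has degree $0$, Riemann--Roch yields $\ell(nO) = n$ for every integer $n \geq 1$. The strategy is to exhibit an explicit basis for the Riemann--Roch spaces $L(nO)$ for $n = 1, \dots, 6$, observe a forced linear dependence in dimension $6$, and read off the Weierstrass equation from this relation.

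Concretely, I would proceed as follows. The space $L(O)$ is one-dimensional and consists of the constants. Moving to $L(2O)$, the dimension jumps to $2$, so there exists a nonconstant function $x$ whose only pole is at $O$ with order exactly $2$. Similarly, $L(3O)$ is three-dimensional and contains a new function $y$ whose pole at $O$ has order exactly $3$. Counting pole orders, the sets $\{1, x, y, x^2\}$ and $\{1, x, y, x^2, xy\}$ are bases of $L(4O)$ and $L(5O)$ respectively (the functions listed are linearly independent because their pole orders at $O$ are distinct). In $L(6O)$, however, both $x^3$ and $y^2$ have pole order $6$ at $O$, so the seven functions $\{1, x, y, x^2, xy, x^3, y^2\}$ lie in a $6$-dimensional space and must therefore satisfy a nontrivial linear relation. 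Normalizing the coefficients of $y^2$ and $x^3$ (which are nonzero, since the leading pole orders of $y^2$ and $x^3$ cannot cancel against the others), one obtains an equation of the shape
\begin{equation*}
y^2 + a_1 xy + a_3 y = x^3 + a_2 x^2 + a_4 x + a_6.
\end{equation*}

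For the rationality claim, the key observation is that the divisor $nO$ is defined over $\mathbb{Q}$ because $O$ is, and hence each $L(nO)$ has a $\mathbb{Q}$-basis. Choosing $x \in L(2O)$ and $y \in L(3O)$ to be defined over $\mathbb{Q}$, the forced linear relation above has coefficients in $\mathbb{Q}$ as well. Homogenizing by inserting the variable $z$ gives the projective cubic \eqref{eq:cubicp2}.

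The final step, which I would expect to be the main technical hurdle, is checking that the morphism $\varphi = [x : y : 1] : E \setminus \{O\} \to \mathbb{P}^2$ extends to $O$ with $\varphi(O) = [0:1:0]$, and that the image is precisely the projective cubic with $\varphi$ an isomorphism onto it. For this one shows that $\varphi$ is a birational morphism from the smooth projective curve $E$ onto its image $C \subset \mathbb{P}^2$, then argues that $C$ is irreducible (any factorization would contradict $[K(E) : K(x)] = 2$ and $[K(E) : K(y)] = 3$, forcing $K(E) = K(x, y)$) and in fact nonsingular (via the genus formula: a singular plane cubic has geometric genus $0$, contradicting $g(E)=1$). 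A birational morphism between smooth projective curves is an isomorphism, completing the identification of $E$ with its Weierstrass model over $\mathbb{Q}$.
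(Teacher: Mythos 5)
Your proof is correct and is essentially the classical argument from Silverman's \emph{The Arithmetic of Elliptic Curves}, Chapter III, Proposition 3.1, which is exactly the reference the paper cites for this statement without reproducing the proof. The Riemann--Roch bookkeeping ($\ell(nO)=n$, extracting $x$ and $y$ of pole orders $2$ and $3$, finding a forced relation in $L(6O)$, and confirming the resulting map to $\mathbb{P}^2$ is an isomorphism onto a smooth plane cubic) is the standard route and you have reproduced it faithfully, including the descent to $\mathbb{Q}$ via $\mathbb{Q}$-rationality of $O$ and the $L(nO)$.
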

\begin{remark}[Short Weierstrass form, \cite{Silverman}, Chapter III.1]
Every elliptic curve over $\mathbb Q$ can be reduced further into the short Weierstrass form (via appropriate coordinate transformations)  which is given by 
\begin{equation*}
    E_\mathbb{Q} : = y^2 = x^3  + A x + B, \ \ A,B \in \mathbb Q  ~.
\end{equation*}
\end{remark}
Equation \eqref{eq:cubicp2} can be obtained from the intersection of two quadrics in $\mathbb P^3$, with  intersection at a \emph{known} $\mathbb Q$-point. 
\subsection{Invariants for Spohn curves as  intersection of quadrics in $\mathbb P^3$}
Consider two quadric equations $P_1, P_2 \in \mathbb Q[x,y,z,t]$ in $\mathbb P^3$. In the context of $2\times 2$ games, these quadrics are the exact same equations that appear in \eqref{eq:quadricexample}  after renaming the variables. We require that these quadrics intersect in $\mathbb P^3$ at an arbitrary rational point $k = [x_0: y_0: z_0: t_0]$. It is important to know what this point is, and in all the cases at hand we are given this rational point. Finding rational points on elliptic curves is an interesting and challenging endeavor in its own right. We do not make any comments on finding a common rational solution here.\footnote{In addition to the challenging nature of this problem, we also have that for Spohn curves, we know that e.g.\ $k = [0:0:0:1]$, i.e.\ the point at infinity in $\mathbb P^3$, is a rational solution.}  Now, given two such intersecting quadrics with an arbitrary common rational point, we wish to compute the $j$-invariant. 

The first step is an appropriate representation of quadrics in $\mathbb P^3$. Let the coordinates in $\mathbb P^3$ be $V = (x,y,z,t)^\top$.\footnote{These are projective coordinates but for the sake of explaining the algorithm, we represent them as a vector.} We represent the quadrics as 
\begin{equation*}
\begin{split}
     P_1 = V^\top \cdot A \cdot V, \hspace{0.3cm} \ A  = &\begin{pmatrix}
        a_{00} & a_{01} & a_{02} & a_{03} \\ 
        a_{10} & a_{11} & a_{12} & a_{13} \\ 
        a_{20} & a_{21} & a_{22} & a_{23} \\ 
        a_{30} & a_{31} & a_{32} & a_{33} \\ 
    \end{pmatrix},~ \hspace{0.3cm} A \in \text{ Mat}_{4\times 4}(\mathbb Q) \\
        P_2 = V^\top \cdot B \cdot V, \hspace{0.3cm} \ B  = &\begin{pmatrix}
        b_{00} & b_{01} & b_{02} & b_{03} \\ 
        b_{10} & b_{11} & b_{12} & b_{13} \\ 
        b_{20} & b_{21} & b_{22} & b_{23} \\ 
        b_{30} & b_{31} & b_{32} & b_{33} \\ 
    \end{pmatrix},~ \hspace{0.3cm} B \in \text{ Mat}_{4\times 4}(\mathbb Q) ~. 
\end{split}
\end{equation*}
Given $k = [x_0: y_0: z_0: t_0]$, the common rational solution to the two quadrics $P_1, P_2$, we define 
\begin{equation}
\label{eq:solnmatrix}
    Q = \begin{pmatrix}
        1 & 0 & 0 & x_0 \\
        0 & 1 & 0 & y_0 \\
        0 & 0 & 1 & z_0 \\ 
        0 & 0 & 0 & t_0
    \end{pmatrix}~.
\end{equation}
When the common rational solution is $[0:0:0:1]$, the matrix $Q$ as in \eqref{eq:solnmatrix} is just the $4\times 4$ identity matrix. This is always the case we have when considering the Spohn curve. 
Having the common rational point at infinity makes it easier to project the curve to lower dimensions. 
\\[0.3cm]
In case the common rational solution of the 
intersection of the two quadrics is \emph{not} the point at infinity i.e.\ $Q$ is not the $4\times 4$ identity matrix, one can perform coordinate transformations on $P_1$ and $P_2$ such that the common rational solution becomes the point at infinity. This straightforward algorithm is explained below and also in \cite{knaf}. To transform the two quadrics such that the common solution is the point at infinity:   \vspace{0.5cm}
\begin{enumerate}[label=(\textbf{Step \Roman*})]
\item \textbf{Normalize solutions:} Choose a normalization of the solutions such that $t_0\neq 0$. One may also assume that $t_0 = 1$ as in \cite{knaf}. \vspace{0.5cm}
    \item \textbf{Coordinate change:} Now, let $V=(x,y,z,1)^{\top}$ be the coordinates of $\mathbb P^3$. Transform to coordinates $W = (X, Y, Z, T)^\top$ such that the common rational point in the new coordinates is at $[X_0:Y_0:Z_0:T_0] = [0:0:0:1]. $ This is done by the following transformation:
    \begin{equation*}
        V = \begin{pmatrix}
        1 & 0 & 0 & x_0 \\
        0 & 1 & 0 & y_0 \\
        0 & 0 & 1 & z_0 \\ 
        0 & 0 & 0 & 1
    \end{pmatrix} \cdot W , \hspace{0.2cm} \ W = \begin{pmatrix}
        1 & 0 & 0 & -x_0 \\
        0 & 1 & 0 & -y_0 \\
        0 & 0 & 1 & -z_0 \\ 
        0 & 0 & 0 & 1
    \end{pmatrix} \cdot V~.
    \end{equation*}
    \vspace{0.2cm}
    \item \textbf{Change of quadrics:} Under the coordinate change, the quadrics transform as follows
\begin{equation*}
\begin{split}
  P_1 (x,y,z,t) \mapsto  P_1'(X,Y,Z,T) &= W^\top\cdot Q^\top \cdot A \cdot Q \cdot W \\ 
 P_2 (x,y,z,t) \mapsto  P_2'(X,Y,Z,T) &= W^\top\cdot Q^\top \cdot B \cdot Q \cdot W ~.
\end{split} 
\end{equation*}
\vspace{0.2cm}
 \item \textbf{Eliminating terms:} This follows from \cite[§ 7.2.2]{cohen1}. 
    We now have two quadrics $P_1'[X,Y,Z,T]$ and $P_2'[X,Y,Z,T] \subseteq \mathbb P^3 $ with a common rational point at $[0:0:0:1]$. Both these quadrics can be expressed as  
    \begin{equation*}
        P_i'(X,Y,Z,T) = K_i \ T^2 + L_i(X,Y,Z) \ T + M_i(X,Y,Z), \ i = 1,2 
    \end{equation*}
    where $K_i$ is a constant, $L_i(X,Y,Z)$ are linear in $X,Y,Z$ and $M_i$ are quadratic in $X,Y,Z$. Since the common rational point is already at $[0:0:0:1]$, $K_i = 0$ for $i = 1,2$. This means that the two intersecting quadrics take the form 
    \begin{equation*}
        \begin{split}
             P_1'(X,Y,Z,T) &=  L_1(X,Y,Z) \  T + M_1(X,Y,Z) \\
              P_2'(X,Y,Z,T) &=  L_2(X,Y,Z) \ T + M_2(X,Y,Z) ~.
        \end{split}
    \end{equation*}
It is important to note that $L_1$ and $L_2$ above are linearly \emph{independent}. The proof of this statement can be found in \cite[§ 7.2.2]{cohen1}. To state it succinctly, if $L_1, L_2$ are linearly \emph{dependent}, then there is a linear combination which will make the intersecting quadric a genus $0$ curve, which is false for elliptic curves. 
\vspace{0.5cm}
\item \textbf{Reduction to cubic in $\mathbb P^2$}. The equation $$C(X,Y,Z) = L_1(X,Y,Z) M_2(X,Y,Z) - L_2(X,Y,Z) M_1(X,Y,Z), \ C\subseteq \mathbb P^2 $$ is homogeneous of degree 3, and represents an elliptic curve in homogeneous, projective coordinates. One may wish to proceed to represent the curve in Weierstrass form from here, however it is not necessary for the purposes of this paper.
\end{enumerate}
\begin{remark}[Defining elliptic curves from cubics in \texttt{Pari/GP}] In \texttt{Pari/GP} \cite{pari}, one can use in-built commands \texttt{ellfromeqn} and \texttt{ellinit} to define elliptic curves directly from the cubic in $\mathbb P^2$. The file \href{https://mathrepo.mis.mpg.de/elliptic_curves_game_theory/#computations-for-section-4}{\textcolor{magenta}{\texttt{getJ.gp}}} in \cite{mathrepo} contains the relevant code to do this in \texttt{Pari/GP}.
\end{remark}
\begin{remark}
   To re-iterate, since $[0:0:0:1]$ is always a common rational solution for Spohn curves, one may proceed straight to (\textbf{Step IV}) above. \end{remark}
\textbf{Aronhold invariants and the $j$-invariant:}

Starting with two quadrics that intersect at an arbitrary rational point, we now have a cubic in $\mathbb P^2$. This cubic depends on the monomials in the two quadrics, as well as the common rational point in $\mathbb P^3$, which has been transformed to $[0:0:0:1]$. A generic rational cubic equation in 3 variables $(x,y,z)$\footnote{We reset to lowercase coordinates for ease of readability henceforth. For the case of Spohn curves, the coordinates $(x,y,z,t) = (X,Y,Z,T)$.} will have the following form 
\begin{equation}
\label{eq:cubic}
   C(x,y,z):= a x^3 + b y^3 + cz^3 + 3dx^2 y + 3ey^2z + 3fz^2x + 3gxy^2 + 3hyz^2 + 3izx^2 + 6jxyz
~,\end{equation}
where the coefficients $a,b,c,d,e,f,g,h,i,j\in \mathbb Q$. 
From the coefficients of the cubic (and as a result, the coefficients of the monomials of $P_1, P_2$ and the common rational point $[x_0: y_0: z_0: t_0]$), we may derive the two Aronhold ($S$ and $T$) invariants of the cubic $C(x,y,z)$. We wish to point out that the Aronhold invariants as in \eqref{eq:aronholdS} and \eqref{eq:aronholdT} are equivalent to the expressions given in  \cite{mchan} and \cite{sturmfels} under the rescaling of the coefficients of the cubic.\footnote{Since in these references, the cubic is represented as $C(x,y,z) = a x^3 + b y^3 + cz^3 + dx^2 y + ey^2z + fz^2x + gxy^2 + hyz^2 + izx^2 + jxyz$, i.e.\ without the factors of $3$'s and $6$.}
\\[0.2cm]
For the cubic equation of the form \eqref{eq:cubic},
the $S$ invariant is given as:
\begin{equation}
\label{eq:aronholdS}
\begin{split}
        S &= agec - agh^2 - ajbc + ajeh + afbh - af e^2 - d^2ec + d^2h^2 + dibc  \\ & - dieh + dgjc - dgf h - 2dj^2h + 3djfe - df^2b - i^2bh + i^2e^2 - ig^2c  \\ &+ 3igjh - igfe - 2ij^2e + ijfb + g^2f^2 - 2gj^2f + j^4~,
\end{split}
\end{equation}
and the $T$ invariant is given by:

\allowdisplaybreaks
\begin{align}
\label{eq:aronholdT}
\begin{split}
T &= a^2b^2c^2 - 3a^2e^2h^2 - 6a^2behc + 4a^2bh^3 + 4a^2e^3c - 6adgbc^2 \\ 
&+ 18adgehc - 12adgh^3 + 12adjbhc - 24adje^2c + 12adjeh^2 \\ 
&- 12adfbh^2 + 6adfbec + 6adfe^2h + 6aigbhc - 12aige^2c + 6aigeh^2 \\ 
&+ 12aijbec + 12aije^2h - 6aifb^2c + 18aifbeh - 24ag^2jhc - 24aijbh^2 \\ 
&- 12aife^3 + 4ag^3c^2 - 12ag^2fec + 24ag^2fh^2 + 36agj^2ec + 12agj^2h^2 \\ 
&+ 12agjfb c - 60agjfeh - 12agf^2bh + 24agf^2e^2 - 20aj^3bc - 12aj^3eh \\ 
&+ 36aj^2fbh + 12aj^2fe^2 - 24ajf^2be + 4af^3b^2 + 4d^3bc^2 - 12d^3ehc \\ 
&+ 8d^3h^3 + 24d^2ie^2c - 12d^2ieh^2 + 12d^2gjhc + 6d^2gfec - 24d^2j^2h^2 \\ 
&- 12d^2ibhc - 3d^2g^2c^2 - 24g^2j^2f^2 + 24gj^4f - 12d^2gfh^2 + 12d^2j^2ec \\ 
&- 24d^2jfb c - 27d^2f^2e^2 + 36d^2jfeh + 24d^2f^2bh + 24di^2bh^2 \\ 
&- 12di^2bec - 12di^2e^2h + 6dig^2hc - 60digjec + 36digjh^2 + 18digfb c \\ 
&- 6digfeh + 36dij^2bc - 12dij^2eh - 60dijfbh + 36dijf e^2 + 6dif^2be \\ 
&+ 12dg^2jfc - 12dgj^3c - 12dgj^2fh + 36dgjf^2e - 12dgf^3b + 24dj^4h \\ 
&+ 12dj^2f^2b + 4i^3b^2c + 24i^2g^2ec - 27i^2g^2h^2 - 36dj^3fe - 12i^3beh \\ 
&+ 8i^3e^3 - 24i^2gjbc + 36i^2gjeh + 6i^2gfbh + 12i^2j^2bh - 3i^2f^2b^2 \\ 
&- 12dg^2f^2h - 12i^2gfe^2 - 24i^2j^2e^2 + 12i^2jfbe - 12ig^3fc + 12ig^2j^2c \\ 
&+ 36ig^2jfh - 12ig^2f^2e - 36igj^3h - 12igj^2fe + 12igjf^2b + 24ij^4e \\ 
&- 12ij^3fb + 8g^3f^3 - 8j^6 ~.
\end{split}
\end{align}

\noindent These invariants can also be found at \cite{zeb}.
 The discriminant of the elliptic curve is 
    \begin{equation}
    \label{eq:disc}
        \Delta = \frac{64 S^3 - T^2}{1728}~.
    \end{equation}
The $j$-invariant of the elliptic curve is 
    \begin{equation}
    \label{eq:jinvt}
        j = \frac{64 S^3}{\Delta}~.
    \end{equation}
    The $j$-invariant and discriminant as above are again equal to the corresponding expressions in \cite{sturmfels} under the change in notation for the coefficients of the cubic.
    The $j$-invariant in terms of the coefficients of the two quadrics in $\mathbb P^3$ that intersect at an arbitrary rational point is available in \href{https://mathrepo.mis.mpg.de/elliptic_curves_game_theory/#computations-for-section-4}{\color{magenta}\texttt{Generic\textunderscore J\textunderscore invt.txt}}\footnote{This file does not explicitly set $t_0 = 1$ as is assumed in Step 1 of the algorithm to transform the quadrics to have a common rational solution at infinity.}   at \cite{mathrepo}. The $j$-invariant for Spohn curves as in Section~\ref{sec:depeq} is given in \href{https://mathrepo.mis.mpg.de/elliptic_curves_game_theory/#computations-for-section-4}{\color{magenta}\texttt{SpohnCurveJInv.txt}}, also available at \cite{mathrepo}. It corresponds to the result in \cite[Proposition 12]{PortakalSturmfels}
\begin{theorem}[{\cite[Chapter III, Proposition 1.4]{Silverman}}]
\label{thm:isomorphismec}
    Two elliptic curves are isomorphic over $\bar{\mathbb{Q}}$ if and only if their $j$-invariants are equal.  
\end{theorem}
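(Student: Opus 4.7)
The plan is to reduce both curves to short Weierstrass form and then classify admissible changes of coordinates. Over $\bar{\mathbb{Q}}$ (which has characteristic $0$), every elliptic curve is isomorphic to one of the form $E_{A,B}: y^2 = x^3 + Ax + B$ with nonvanishing discriminant $\Delta = -16(4A^3 + 27B^2)$ and
\[
j(E_{A,B}) = -1728 \cdot \frac{(4A)^3}{\Delta}.
\]
So without loss of generality I can assume both given elliptic curves are in this form.

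For the forward implication, I would start from the general fact that any isomorphism between two elliptic curves that sends the distinguished point to the distinguished point is, in affine coordinates, of the form $(x,y)\mapsto (u^2 x + r,\ u^3 y + u^2 s x + t)$ for $u \in \bar{\mathbb{Q}}^{*}$ and $r,s,t \in \bar{\mathbb{Q}}$. Imposing that both source and target are in short Weierstrass form forces $r=s=t=0$, so the isomorphism reduces to $(x,y) \mapsto (u^2 x, u^3 y)$. This induces $A \mapsto u^{-4}A$ and $B \mapsto u^{-6}B$, under which the ratio defining $j$ is manifestly invariant (numerator and denominator both scale by $u^{-12}$).

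For the converse, given $E_{A_1,B_1}$ and $E_{A_2,B_2}$ with equal $j$-invariants, cross-multiplying yields $A_1^{3} B_2^{2} = A_2^{3} B_1^{2}$. I would then split into three cases according to the value of $j$. If $j \neq 0, 1728$, all four coefficients are nonzero and one can pick any fourth root $u$ of $A_1/A_2$ in $\bar{\mathbb{Q}}$; the identity $A_1^3 B_2^2 = A_2^3 B_1^2$ then forces $u^{-6} B_1 = \pm B_2$, and if the sign is wrong one replaces $u$ by $iu$. If $j=1728$ then $B_1=B_2=0$ and only a fourth root of $A_1/A_2$ is required; if $j=0$ then $A_1=A_2=0$ and one needs a sixth root of $B_1/B_2$. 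In every case the existence of the root in $\bar{\mathbb{Q}}$ follows from algebraic closure.

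The main obstacle is the case analysis at $j=0$ and $j=1728$: these are precisely the curves with extra automorphisms (of order $6$ and $4$ respectively), and one needs to verify that the sign/root choice can always be made consistently so that both coefficients transform correctly. This subtlety is exactly why the statement fails over a non-algebraically-closed field (where one only obtains a twist of the curve) and why working over $\bar{\mathbb{Q}}$ is essential in the statement.
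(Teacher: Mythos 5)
Your proposal is correct and is essentially the standard argument given in the cited reference (Silverman, Chapter~III, Proposition~1.4(b)): the paper itself does not prove this statement but simply cites Silverman, and your reduction to short Weierstrass form, invariance of $j$ under the admissible scaling $(x,y)\mapsto(u^2x,u^3y)$, and the case analysis at $j=0,1728$ with the adjustment $u\mapsto iu$ reproduce that textbook proof faithfully.
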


\begin{remark}
    Over $\mathbb{Q}$, not all elliptic curves with the same $j$-invariant are isomorphic, but isomorphic elliptic curves have the same $j$-invariant. {See Example~\ref{ex:bachstrav} for an example where elliptic (Spohn) curves over $\mathbb Q$ have the same $j$-invariant and are isomorphic over $\mathbb Q$, and see Example ~\ref{ex:nonQisom} for elliptic curves over $\mathbb Q$ which have the same $j$-invariant but are not isomorphic over $\mathbb Q$.}
\end{remark}
\textbf{$j$-invariants of Spohn curves: } {We now present a few examples to compute the $j$-invariant of elliptic curves obtained by intersecting quadrics in $\mathbb P^3$ using the steps explained in this section. We consider three classes of examples. The first for the Spohn curve of a generic $2\times 2$ game, the second for a non-generic game, and the third for an elliptic curve that is not a Spohn curve.}
\begin{example}[Generic $2\times 2$ game]\label{Examplejinv}
Consider the quadrics
\begin{equation*}
    \begin{split}
        P_1 &=  -xz+2xt-2yz+yt =  t(2x +y) -(xz + 2yz) \\ 
        P_2 &= -5yx-6xt-3yz-4zt = -t(6x + 4z) -(5xy +3yz) 
    \end{split}
\end{equation*}
coming from the $2 \times 2$ game with payoff tables 
$X^{(1)} = \begin{pmatrix}
    1&2 \\ 0&3
\end{pmatrix}$
and $X^{(2)} = \begin{pmatrix}
    6&1 \\ 4&0
\end{pmatrix}$.
The point $[0:0:0:1]$ is a common rational solution, so we have no terms quadratic in $t$ in the quadrics. To reduce to a cubic in $\mathbb P^2$, we have
\begin{equation*}
\begin{split}
        L_1 &=  (2x +y), \qquad \hspace{0.85cm} L_2 = -(6x + 4z) \\
    M_1 &= -(zx + 2zy), \qquad M_2= -(5yx + 3zy)~,
\end{split}
\end{equation*}
and we recall that $L_i$ is the term in $P_i$ that is linear in $t$, and $M_i$ is the term in $P_i$ that is independent of $t$.
The reduced cubic in $\mathbb P^2$ is 
 \begin{equation*}
    C(x,y,z) = L_1M_2 - L_2 M_1,
 \end{equation*}
 which is explicitly given by
 \begin{equation*}
  C:=  \left(-10y
 - 6z\right) x^2
 + \left(-5y^2
 - 18zy
 - 4z^2\right) x
 + \left(-3zy^2
 - 8z^2y\right) ~.
 \end{equation*}
 Comparing with the coefficients of \eqref{eq:cubic}, we can compute the two Aronhold invariants $S$ \eqref{eq:aronholdS} and $T$ \eqref{eq:aronholdT}, and use them to compute the $j$-invariant as:
 \begin{equation*}
    j = \dfrac{2810381476}{227025}.
 \end{equation*}
This can be computed using the \href{https://mathrepo.mis.mpg.de/elliptic_curves_game_theory/#functions-in-the-mathematica-notebook}{\textcolor{magenta}{\texttt{IntersectionQuadricsJ.nb}}} in \texttt{Mathematica}. One can also initialize the curve in \texttt{Pari/GP} using the \href{https://mathrepo.mis.mpg.de/elliptic_curves_game_theory/#computations-for-section-4}{\textcolor{magenta}{\texttt{getJ.gp}}} script and find the same value for the $j$-invariant (\cite{mathrepo}). An immediate observation here is that for any $\lambda_1, \lambda_2 \in \mathbb R \backslash \{0\}$ and $\alpha_1, \alpha_2 \in \mathbb R$, the games defined by the following two payoff matrices $$\begin{pmatrix}
    \lambda_1  + \alpha_1 &2 \lambda_1 + \alpha_1 \\ \alpha_1 &3\lambda_1 +\alpha_1
\end{pmatrix}
\text{ and }  \begin{pmatrix}
    6\lambda_2+\alpha_2 & \lambda_2+\alpha_2 \\ 4 \lambda_2 + \alpha_2&\alpha_2
\end{pmatrix}$$
have the same (elliptic) Spohn curve and thus the same $j$-invariant. 
\end{example}
\begin{example}[Non-generic $2\times 2$ game]
Consider the quadrics 
\begin{align*}
    P_1(x,y,z,t) &= xz + yz - xt - yt =  - (x+y)t + (xz + yz) \\
    P_2(x,y,z,t) &= -5xy - yz + xt + 5zt =   t (x + 5z) -(5xy + yz)~.
\end{align*}
coming from the $2 \times 2$ game with payoff tables 
$X^{(1)}= \begin{pmatrix} 1& 1 \\ 2 &0 \end{pmatrix}$
and $X^{(2)} = \begin{pmatrix} 3 & -2 \\ -1 & 4 \end{pmatrix}$, for which $[0:0:0:1]$ is a common solution. By following the algorithm above, we can eliminate $ t $ to get a cubic: 
\begin{align*}
    C(x,y,z) &= -(xz + yz)(x + 5z)+(x+y)(5xy + yz)~. 
\end{align*}
The $j$-invariant of this cubic is infinite i.e.\ the discriminant $\Delta$ as in \eqref{eq:disc} is zero and the cubic, being singular, therefore does not define an elliptic curve. This is also apparent from the fact that the game is not generic and falls under case (1) of Theorem~\ref{thm: cases}.
\end{example}
{Intersecting quadrics that arise from generic $2\times 2$ games have $[0:0:0:1]$ as a common rational point. However, as a quick remark, one may use the algorithm explained in this section to compute the $j-$invariant of quadrics that do not necessarily intersect at $[0:0:0:1]$, but rather a different $\mathbb Q$-rational point. For example:}
\begin{example}[Non-Spohn elliptic curve]
Consider the quadrics   
\begin{align*}
    P_1(x,y,z,t) &= x^2+y^2 - z^2 - t^2 \\
    P_2(x,y,z,t) &= xz - zy + yt - zt~.
\end{align*}
for which a common rational solution is $[1:1:1:1]$, but not $[0:0:0:1]$. By following the algorithm as explained {in Steps (I - V) earlier in this section}, we can reduce the two quadrics to a cubic in $\mathbb P^2$ which has the form 
\begin{equation*}
    C(x,y,z) = -x^3 - x y^2 + 3 x^2 z - y^2 z - x z^2 + 2 y z^2 - z^3 ~.
\end{equation*}
This represents an elliptic curve, but does not correspond to the Spohn curve of any generic $2\times 2$ game since it does not have the form (Equation~\ref{eq: Spohn cubic}) of the Spohn cubic. The $j$-invariant of this curve is
\begin{equation*}
    j = \frac{65536}{37} ~.
\end{equation*}
\end{example}

\textbf{An equivalence definition of $2\times 2$ games: }
Here we provide a characterization of equivalent games in terms of the $j$-invariants. 
We recapped in the previous subsection that if elliptic curves over $\mathbb Q$ have the same $j$-invariant, it does not guarantee that they are isomorphic over $\mathbb Q$, and they could be isomorphic over a quadratic extension of $\mathbb Q$. However, it is easy to check if whether two curves defined over the same field are indeed isomorphic. This is implemented in \texttt{Pari/GP} via them command \texttt{ellisisom}. We refer the reader to the \texttt{Pari/GP} manual for more details regarding this. 
We propose a {definition} of full equivalence with respect to the Spohn curve which we define as follows: 
\begin{definition}
 Two $2\times 2$ games whose payoff matrices are $\mathbb Q$-valued are \emph{fully equivalent with respect to the Spohn curve} if and only if the two Spohn curves over $\mathbb Q$ are isomorphic. 
\end{definition}

\begin{example}[Bach or Stravinsky perturbed]
\label{ex:bachstrav}
We consider the following perturbed Bach or Stravinsky games, where two players try to decide on a concert together. They value attending the concert together more than going to their individual preferred concerts. The games are represented in bimatrix form, with the payoff of one player for jointly choosing Bach or Stravinsky being one.
\[
\begin{array}{c|cc}
   & B & S \\
  \hline
  B & (3,2) & (0,1) \\
  S & (0,0) & (2,3) \\
\end{array}
\quad
\begin{array}{c|cc}
   & B & S \\
  \hline
  B & (3,2) & (1, 0) \\
  S & (0,0) & (2,3) \\
\end{array}
\quad
\begin{array}{c|cc}
   & B & S \\
  \hline
  B & (3,2) & (0,0) \\
  S & (0,1) & (2, 3) \\
\end{array}
\quad
\begin{array}{c|cc}
   & B & S \\
  \hline
  B & (3,2) & (0, 0) \\
  S & (1,0) & (2,3) \\
\end{array}
\]

\noindent We obtain the Spohn cubics corresponding to these four games in order as follows:
\begin{align*}
&C_1 =  x^2y - 2xy^2 + 3x^2z - xyz + 2y^2z+ 9xz^2 \\
&C_2 = 2x^2y - 2xy^2 + 3x^2z + xyz + 3yz^2+ 9xz^2 \\
&C_3 = 2x^2y - 4xy^2 + 3x^2z + xyz - 2y^2z + 6xz^2 \\ 
&C_4 = 2x^2y - 4xy^2 + 2x^2z - xyz - 3yz^2 + 6xz^2 
\end{align*}

The $j$-invariant of these Spohn cubics are all the same and is equal to $365986170577/44976384$. In fact, these curves are all isomorphic to each other over $\mathbb Q$. We therefore conclude that the games corresponding to $C_1, C_2, C_3, C_4$ are fully equivalent.
\end{example}

{To re-iterate}, it is important to point out that two elliptic curves with the same $j$-invariant need not be isomorphic over $\mathbb Q$. In such cases, the two elliptic curves can be isomorphic up to \emph{twisting by a character} \cite[§10.5]{Silverman}. 
\begin{example}
\label{ex:nonQisom}
Consider the two elliptic curves in Weierstra\ss~ form 
\begin{align*}
    E_1 :  &-x^3 - 103072987022928/199086408481 \ x + \\ &(y^2 - 52977693274235725360768/88830563686545871)
\end{align*} and 
\begin{align*}
    E_2: &-x^3 - 2576824675573200/199086408481 \ x + \\ & (y^2 - 6622211659279465670096000/88830563686545871)~. 
\end{align*}
The $j$-invariant of both these curves is $44564/446191$. However, these curves are not isomorphic over $\mathbb{Q}$.\footnote{In fact $E_2$ is a quadratic twist of $E_1$ by a character $\bmod\  5$.}
\end{example}
We have thus far not encountered any examples of two Spohn cubics which have the same $j$-invariant but are not isomorphic over $\mathbb Q$, and are related to each other by a twist. It is of our opinion that these cases demand further careful scrutiny in the context of Spohn cubics. Such a study would be necessary in order to fully characterize when two $2\times 2$ games are equivalent and what this means in terms of dependency equilibria.

\appendix
\section{Continued fraction approximation of real numbers}
\label{app:contfrac}
In \texttt{Pari/GP}, rational approximations can be implemented using the command \texttt{contfrac}. A \href{https://mathrepo.mis.mpg.de/elliptic_curves_game_theory/#rational-approximations-of-real-numbers}{\textcolor{magenta}{script }}to compute rational approximations using continued fractions (\texttt{confrac.gp}) can also be found on \cite{mathrepo}. A short tutorial and example are presented below:

\lstinputlisting[caption=Evaluating Continued Fractions, label={lst:contfrac}, ]{contfrac.gp}
\begin{example}[Rational approximation to $\zeta(3)$] Irrationality of $\zeta(3)$ was established in the 1970's by R. Ap\'ery, meaning that its continued fraction representation is infinitely long. However, one can approximate $\zeta(3)$ as a rational number to any finite precision using continued fractions.
\end{example}

\lstinputlisting[caption=Rational approximation of $\zeta(3)$, label={lst:exzeta3}]{example.gp}
For 15 convergents, $$\zeta(3) \approx \dfrac{1479821}{1231074}$$ is correct up to $12$ decimal places,
where as $$\zeta(3) \approx \dfrac{461424925}{383862797}$$ which is the continued fraction approximation of $\zeta(3)$ to $20$ convergents gives the correct approximation to $18$ decimal places.
In \texttt{Mathematica}, the command to construct a continued fraction is \texttt{ContinuedFraction}, while the command to evaluate a contined fraction is \texttt{FromContinuedFraction}.


\end{document}